\setlist[itemize]{leftmargin=*}
\setlist[enumerate]{labelsep=*, leftmargin=1.7pc}
\newcommand{\V}{{\mathfrak V}}
\newcommand{\Ni}{{\mathfrak N}}
\newcommand{\B}{{\mathfrak B}}
\newcommand{\X}{{\mathfrak X}}
\newcommand{\Y}{{\mathfrak Y}}
\renewcommand{\U}{{\mathfrak U}}
\newcommand{\A}{{\mathfrak A}}
\newcommand{\var}[1]{\mathrm{var}\left( #1 \right)}
\newcommand{\varr}[1]{\mathrm{var}( #1 )}
\newcommand{\Wr}{\,\mathrm{Wr}\,}
\newcommand{\Wrr}{\,\mathrm{wr}\,}
\DeclareSymbolFont{cmsymbols}{OMS}{cmsy}{m}{n}
\DeclareSymbolFontAlphabet{\mathcal}{cmsymbols}
\theoremstyle{plain}
\newtheorem{Theorem}{Theorem}[section]
\newtheorem{Lemma}[Theorem]{Lemma}
\newtheorem{Proposition}[Theorem]{Proposition}  
\newtheorem{Corollary}[Theorem]{Corollary}  
\theoremstyle{definition}
\theoremstyle{remark}
\newtheorem{Remark}[Theorem]{\it Remark} 
\newtheorem{Example}[Theorem]{\it Example} 
\numberwithin{equation}{section}
\begin{document}

{\footnotesize  
\noindent
Journal of Group Theory, ISSN 1435-4446\\
Accepted for publication}

\vskip6mm

%%%%%%%%%%%%%%%%%%%%%%%%%%%%%%%%%%%%%%%%%%%%%%
%%%%%%%%%%%%%%%%%%%%%%%%%%%%%%%%%%%%%%%%%%%%%%

\subjclass{20E22, 20E10, 20K01, 20K25, 20D15.}
\keywords{Wreath products, varieties of groups, products of varieties of groups, nilpotent varieties, finite groups,  abelian groups, nilpotent groups, $p$-groups}

\title{Subvariety structures in certain product varieties of groups}

\author{V. H. Mikaelian
%\\
%Y\lowercase{erevan} S\lowercase{tate} %U\lowercase{niversity}
}

\begin{abstract}
We classify certain cases when the wreath products of distinct pairs of groups  generate the same variety. This allows us to investigate the subvarieties of some nilpotent-by-abelian product varieties  $\U\V$ with the help of wreath products of groups. In particular,  using wreath products we find such subvarieties in nilpotent-by-abelian $\U\V$, which have the same nilpotency class, the same length of solubility, and the same exponent, but which still are distinct subvarieties. Obtained classification strengthens our recent work on varieties generated by wreath products.
\end{abstract}

\date{\today}

\maketitle

%%%%%%%%%%%%%%%%%%%%%%%%%%%%%%%%%%%%%
%%%%%%%%%%%%%%%%%%%%%%%%%%%%%%%%%%%%%
%%%%%%%%%%%%%%%%%%%%%%%%%%%%%%%%%%%%%
\section{\bf Introduction}

\noindent
The objective of this note is to study subvarieties generated by wreath products in certain product varieties of groups, and to discover the cases when wreath products of distinct pairs of groups generate the same variety of groups. 
Or in more specific notation given below, we  investigate the cases when the  equality  \eqref{Equation varieties are equal} holds for the  pairs of groups $A_1, B_1$ and $A_2, B_2$.

Wreath products are among the main tools to study products $\U\V$ of varieties of groups. The methods used in the literature %most 
typically consider groups $A$ and $B$ generating $\U$ and $\V$ respectively, and then find extra conditions, under which the wreath product ${A \Wr B}$ generates $\U\V$, i.e., conditions, under which the equality
\begin{equation}
\tag{$*$} 
\label{EQUATION_main}    
\var{A \Wr B} = \var{A} \var{B}
\end{equation}
holds for $A$ and $B$ (here \textit{Cartesian} wreath product is assumed, but all  results we bring are true for \textit{direct} wreath products also). 
%The advantage of this aproach is that having the equality \eqref{EQUATION_main} we by Birkhoff's Theorem can get all the groups in $\var{A} \var{B}=\U\V$ by applying the operations of taking the homomorphic images, subgroups, Cartesian products to \textit{single} group $A \Wr B$ \cite[15.23]{HannaNeumann}.
%
For chronological development of this approach and for background information on varieties of groups or on wreath products we refer to 
%important work 
\cite{HannaNeumann, 
	3N,
	Some_remarks_on_varieties,
	Baumslag nilp wr,
	B+3N, 
	ShmelkinOnCrossVarieties, Burns65}
and to literature cited therein.

Generalizing some results in the cited literature, we in \cite{AwrB_paper}-\cite{Classification Theorem} were able to suggest criteria classifying all the cases when \eqref{EQUATION_main} holds for groups from certain classes of groups: abelian groups, $p$-groups, nilpotent groups of finite exponent, etc.~(see, in particular, very brief outline of results in Section 5 of \cite{Classification Theorem}).

\smallskip
Here we turn to a sharper problem of comparison of two varieties, both generated by wreath products% (not necessarily correlated with product varieties)
. Namely, take $A_1, B_1$ and $A_2, B_2$ to be pairs of non-trivial groups such that\,
$\varr {A_1}=\varr {A_2}$, \, $\varr {B_1}=\varr {B_2}$,\, and distinguish the cases when the equality
\begin{equation}
\tag{$**$}
\label{Equation varieties are equal}
\var{A_1 \!\Wr B_1} = \var{A_2 \!\Wr B_2}
\end{equation}
holds. 
The main classification criterion 
given in Theorem~\ref{Theorem var(Wr) are equal} 
covers the cases of nilpotent $A_1,A_2$ and abelian $B_1,B_2$, with some restrictions on exponents.

%If we take $A_2$ and $B_2$ so that $\var{A_2 \Wr B_2}$ generates the product $\var{A_2} \var{B_2}$, then \eqref{Equation varieties are equal} is reduced to \eqref{EQUATION_main}.
Besides getting a generalization of \eqref{EQUATION_main} our study of equality \eqref{Equation varieties are equal} is motivated by some applications one of which we would like to outline here.
Classification of subvariety structures of $\U\V$ is incomplete even when $\U$ and $\V$ are such ``small'' varieties as the abelian varieties $\A_m$ and $\A_n$ respectively. Here are some of the  results in this direction:
$\A_p$ (for prime numbers $p$) are the simplest non-trivial varieties,  as they consist of the Cartesian powers of the cycle $C_p$ only. L.G.~Kov\'acs and M.F.~Newman in~\cite{KovacsAndNewmanOnNon-Cross} fully described the subvariety structure in the product $\A_p^2 = \A_p \A_p$ for $p>2$. 
Later they continued this classification for the varieties $ \A_{p^u} \A_p$. Their research was unpublished for many years, and it appeared in 1994 only \cite{Kovacs Newman Ravello} (parts of their proof are present in \cite{Bryce_Metabelian_groups}).
Another direction is  description of subvarieties in the product $\A_m \A_n$ where $m$ and $n$ are \textit{coprime}. This is done by C.~Houghton (mentioned by Hanna Neumann in \cite[54.42]{HannaNeumann}), by
P.\,J.~Cossey
(Ph.D. thesis \cite{Cossey66}, mentioned by R.A.~Bryce in \cite{Bryce_Metabelian_groups}).
A more general result of R.A.~Bryce classifies the subvarieties of $\A_m \A_n$, where $m$ and $n$ are \textit{nearly prime} in the sense that, if a prime $p$ divides $m$, then $p^2$ does not divide $n$ \cite{Bryce_Metabelian_groups}.
In 1967 Hanna Neumann wrote that classification of subvarieties of  $\A_m \A_n$ for arbitrary $m$ and $n$  \textit{``seems within
	reach''} \cite{HannaNeumann}. 
And R.A.~Bryce in 1970 mentioned that \textit{``classifying all metabelian varieties
	is at present slight'' } \cite{Bryce_Metabelian_groups}. However, nearly  half a century later this task is not yet accomplished: Yu.A.~Bakhturin and A.Yu.~Olshanskii remarked in the  survey \cite{Bakhturin Olshanskii} of 1988 and of 1991 that \textit{``classification of all nilpotent metabelian group varieties has not been completed yet''.}

\smallskip
As this brief summary shows, one of the cases, when the subvariety structure of $\U\V$ is less known, is the case when $\U$ and $\V$ have non-coprime exponents divisible by high powers $p^u$ for many prime numbers $p$.
Thus, even if we cannot classify all the subvarieties in some product varieties $\U\V$, it may be  interesting to find those subvarieties in $\U\V$, which are generated by wreath products. %(especially in cases when exponents of $A$ and $B$ have many common prime divisors).
We, surely, can take any groups $A \in \U$ and $B \in \V$, and then $A \Wr B$ will generate some subvariety in $\U\V$. 
But in order to make this approach reasonable, we yet have to detect \textit{if or not two wreath products of that type generate the same subvariety}, i.e, if or not the equality \eqref{Equation varieties are equal} holds for the given pairs of groups.

\smallskip
Yet another outcome of this research may be stressed.
In the literature the different subvarieties are often distinguished by their different nilpotency classes, different lengths of solubility, or different exponents (see, for example, classification of subvarieties of $\A_p^2$ in~\cite{KovacsAndNewmanOnNon-Cross}).
Using wreath products technique, we below construct such subvarieties of $\U\V$, which have the same nilpotency class, the same length of solubility, the same exponent, but which still are distinct subvarieties (see examples in Section~\ref{SE Examples} below).

\smallskip
Since our study of \eqref{EQUATION_main} was in detail presented in publications, some of which are very recent,  we do not want to directly or indirectly  repeat here any proof fragment which might
have been presented in our earlier publications. Instead, we just refer to facts in respective articles, and give the links to our ArXiv files  in References.

%%%%%%%%%%%%%%%%%%%%%%%%%%%%%%%%%%%%%
%%%%%%%%%%%%%%%%%%%%%%%%%%%%%%%%%%%%%
%%%%%%%%%%%%%%%%%%%%%%%%%%%%%%%%%%%%%
\section{\bf Equivalence of the $p$-primary components and the main theorem}
\label{SE Equivalence and  main theorem}

\noindent
Before we turn to the main consideration evolving groups of \textit{finite exponents} only, let us briefly discuss the equality \eqref{Equation varieties are equal} for some other cases also. 

If $B_1$ and $B_2$ both are abelian groups \textit{not} of finite exponent, then they are \textit{discriminating} groups~\cite{HannaNeumann, 3N,  AwrB_paper} and, thus, for any $A_1$ and $A_2$ the equality \eqref{EQUATION_main} holds for the pairs $A_1, B_1$ and $A_2, B_2$   \cite{3N, HannaNeumann}. 
If we in addition have the conditions
$\varr {A_1}=\varr {A_2}$ and $\varr {B_1}=\varr {B_2}$,
then clearly $\varr {A_1}\, \varr {B_1}=\varr {A_2} \,\varr {B_2}$, and so \eqref{Equation varieties are equal} also takes place for these groups $A_1, B_1, \; A_2, B_2$.

If one of $B_1$ and $B_2$ is of finite exponent and the other is not, then  $\varr {B_1}\neq \varr {B_2}$, and \eqref{Equation varieties are equal} not to be considered for this case, at all. 

This examples show why the case of finite exponents is the most interesting one, and starting from here we are going to consider that case only.

\smallskip
By Pr{\"u}per's Theorem any abelian group $B$ of finite exponent is a direct product of its finite cyclic subgroups. Recall that a $p$-primary component $B(p)$ of $B$ is the subgroup of all elements of $B$, whose orders are powers of $p$. And since $B$ clearly is a direct product of its $p$-primary components $B(p)$, the orders of the mentioned cyclic subgroups can be supposed to be powers of primes. %
If the cardinality of cyclic factors $C_{p^u}$ of order $p^u$ in this decomposion is $\mathfrak m_{p^{u}}$, we can write their direct product as $C_{p^{u}}^{\mathfrak m_{p^{u}}}$\!\!. Then $B(p)$ is a product of some summands of that type:
\begin{equation}
\label{Equation decomposition of B(p)}
B(p)
=C_{p^{u_1}}^{\mathfrak m_{p^{u_1}}}
\!\!\times \cdots \times
C_{p^{u_r}}^{\mathfrak m_{p^{u_r}}}\!\!\!,
\end{equation}
where we may suppose $u_1 \ge \cdots \ge u_r$. The cardinal numbers $\mathfrak m_{p^{u_1}},\ldots,\mathfrak m_{p^{u_r}}$ are invariants of $B(p)$ in the sense that they characterize $B(p)$ uniquely (see L.~Fuchs' textbook \cite[Section 35]{Fuchs1}, from where we also adopted the symbols $\mathfrak m_{p^{u}}$ and the above notation). 
If $B(p)$ is finite, then all the cardinals $\mathfrak m_{p^{u_1}},\ldots,\mathfrak m_{p^{u_r}}$ clearly are finite. Otherwise, at least one of them will be infinite, and we can always choose the \textit{first} one of such infinite invariants $\mathfrak m_{p^{u_i}}$.
%
%[\textbf{Add $p$-primary component definition, definition of $C_p$,  definition of $C_p^n$}]

\begin{Example}
	\label{Example decomposition samples}
Consider the group
$
B=C_{3^5}^6 \times C_{3^3}^{\aleph_0} \times C_{3^2}^5 \times C_{3}^{\aleph}
\,\,\times\,\, 
C_{5^3}^4 \times C_{5^2}.
$
For the $3$-component $B(3)$ of $B$ we have
$u_1=5$, ${\mathfrak m_{3^{u_1}}}\!=6$;\;
$u_2=3$, ${\mathfrak m_{3^{u_2}}}=\aleph_0$;\;
$u_3=2$, ${\mathfrak m_{3^{u_3}}}=5$;\;
$u_4=1$, ${\mathfrak m_{3^{u_4}}}=\aleph$. So, the \textit{first} infinite factor of $B(3)$ is $C_{3^3}^{\aleph_0}$, although the factor $C_{3}^{\aleph}$ is of higher cardinality.
And for the $5$-component $B(5)$  we have 
$u_1=3$, ${\mathfrak m_{5^{u_1}}}=4$;\;
$u_2=2$, ${\mathfrak m_{5^{u_2}}}=1$, so $B(5)$ has \textit{no} infinite factor.
\end{Example}

Let $B_1$ and $B_2$ be abelian groups of finite exponent, and let for a prime $p$ their $p$-primary components $B_1(p)$ and $B_2(p)$ have direct decompositions of the above type:
\begin{equation}
\label{Equation decomposition of B_1(p)}
B_1(p)
=C_{p^{u_1}}^{\mathfrak m_{p^{u_1}}}
\!\!\times \cdots \times
C_{p^{u_r}}^{\mathfrak m_{p^{u_r}}}\!\!\!,
\end{equation}
\begin{equation}
\label{Equation decomposition of B_2(p)}
B_2(p)
=C_{p^{v_1}}^{\mathfrak m_{p^{v_1}}}
\!\!\times \cdots \times
C_{p^{v_s}}^{\mathfrak m_{p^{v_s}}}\!\!.
\end{equation}
Define a specific equivalence relation $\equiv$ between such $B_1(p)$ and $B_2(p)$.
Namely:
\begin{enumerate}
	\item if $B_1(p),B_2(p)$ both are \textit{finite}, then  $B_1(p) \equiv B_2(p)$ if and only if $B_1(p)$ and $B_2(p)$ are isomorphic, i.e., $r=s$ and $u_i=v_i$, ${\mathfrak m_{p^{u_i}}}={\mathfrak m_{p^{v_i}}}$ for each $i=1,\ldots,r$;
	\item if $B_1(p),B_2(p)$ both are \textit{infinite}, then  $B_1(p) \equiv B_2(p)$ if and only if there is a $k$ such that:
\begin{enumerate}
\item[\textit{i}) ] $u_i=v_i$ and ${\mathfrak m_{p^{u_i}}}={\mathfrak m_{p^{v_i}}}$ for each $i=1,\ldots,k-1$;
\item[\textit{ii}) ] $C_{p^{u_k}}^{\mathfrak m_{p^{u_k}}}$ is the first infinite factor in decomposition \eqref{Equation decomposition of B_1(p)}, $C_{p^{v_k}}^{\mathfrak m_{p^{v_k}}}$ is the first infinite factor in decomposition \eqref{Equation decomposition of B_2(p)}, and $u_k=v_k$;
\end{enumerate}
	\item $B_1(p),B_2(p)$ are \textit{not} equivalent for all other cases.
\end{enumerate}

Notice that in point (\textit{ii}) above we do not require isomorphism of $C_{p^{u_k}}^{\mathfrak m_{p^{u_k}}}$ and $C_{p^{v_k}}^{\mathfrak m_{p^{v_k}}}$\!\!. We require them both to be direct products of \textit{infinitely} many copies of the same cyclic group $C_{p^{u_k}}$. In particular, $\mathfrak m_{p^{u_k}}$ and $\mathfrak m_{p^{v_k}}$ can be distinct infinite cardinal numbers.

\begin{Example} 
	\label{Example equivalent samples}
In order to get a group equivalent to the group  $B$ of Example~\ref{Example decomposition samples}, we can replace in $B$ the factors $C_{3^2}^5$  and $C_{3}^{\aleph}$ by arbitrary direct product of copies of the cycles $C_{3^2}$ and $C_3$. Also, we can replace $C_{3^3}^{\aleph_0}$ by, say, $C_{3^3}^{\aleph}$. However, we cannot alter any of the remaining factors $C_{3^5}^6$, $C_{5^3}^4$, $C_{5^2}$.
\end{Example}

In these terms our main theorem is:

\begin{Theorem}
	\label{Theorem var(Wr) are equal}%%%%%%%%%%%%%%%%%%%%%%%%%%%%%%%%%%%%%%%%%%%%%
	Let $A_1, A_2$ be non-trivial nilpotent groups of  exponent $m$ generating the same variety, and let $B_1,\,B_2$ be non-trivial abelian groups of exponent $n$  generating the same variety, where any prime divisor $p$ of $n$ also divides $m$. Then equality \eqref{Equation varieties are equal} holds for $A_1, A_2, B_1, B_2$ if and only if $B_1(p) \equiv B_2(p)$ for each $p$.
\end{Theorem}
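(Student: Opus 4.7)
My plan is to exploit the $p$-primary decomposition of $B_i$ and to relate $\var{A_i \Wr B_i}$ to the wreath products $A_i \Wr B_i(p)$ one prime at a time. Since $B_i = \prod_p B_i(p)$ and every prime of $\mathrm{exp}(B_i)$ divides $m = \mathrm{exp}(A_i)$ by hypothesis, the first step is to show that the equality \eqref{Equation varieties are equal} is equivalent to its prime-local analogue $\var{A_1 \Wr B_1(p)} = \var{A_2 \Wr B_2(p)}$ for every prime $p \mid n$. Here the embedding/discrimination techniques of \cite{AwrB_paper}--\cite{Classification Theorem} apply, reducing the problem to a fixed prime $p$ and abelian $p$-groups $B_1(p), B_2(p)$.

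For the ``if'' direction, assume $B_1(p) \equiv B_2(p)$ and split into the defining cases of $\equiv$. When both components are finite they are isomorphic, so the two wreath products differ only in their first factor, and since $\varr{A_1} = \varr{A_2}$ it is routine to verify $\var{A_1 \Wr B_1(p)} = \var{A_2 \Wr B_2(p)}$. When both are infinite they share a common finite ``head'' $C_{p^{u_1}}^{\mathfrak m_{p^{u_1}}} \times \cdots \times C_{p^{u_{k-1}}}^{\mathfrak m_{p^{u_{k-1}}}}$ together with a first infinite factor of common exponent $p^{u_k}$. The key observation is that any infinite direct power of $C_{p^{u_k}}$ is discriminating for $\A_{p^{u_k}}$; consequently, any further factors in the decomposition (of smaller exponent, or of the same exponent with any infinite cardinality) contribute nothing new once they are absorbed into the wreath product. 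This reduces the comparison to matching heads plus a single infinite factor $C_{p^{u_k}}^{\aleph_0}$, whence the two varieties coincide.

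For the ``only if'' direction I would argue contrapositively: if $B_1(p) \not\equiv B_2(p)$ for some $p$, then either a verbal identity or a finitely generated test group must lie in exactly one of the two varieties. When one component is finite and the other infinite, or both are finite but non-isomorphic, standard Ulm-type invariants of abelian $p$-groups carry through the wreath product. The delicate subcase is when both components are infinite but their finite heads differ, or when the first infinite factors have different exponents; there a suitably chosen element of $A_i \Wr B_i(p)$ whose support mirrors the mismatch separates the two varieties. The main obstacle is the threshold argument in the ``if'' direction: one must prove rigorously that the first infinite factor $C_{p^{u_k}}^{\mathfrak m_{p^{u_k}}}$ absorbs all subsequent factors in the sense of generated variety while leaving the preceding finite factors strictly intact. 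This combines discrimination for $\A_{p^{u_k}}$ with the commutator arithmetic of wreath products under the nilpotency and exponent constraints on $A_i$, and it is precisely the setting where the earlier classification machinery of \cite{Classification Theorem} must be invoked.
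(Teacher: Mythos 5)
Your sufficiency argument follows essentially the same route as the paper's: for finite equivalent components the groups are isomorphic, and for infinite ones the paper makes your ``absorption'' idea precise by sandwiching $A \Wr B_{\aleph_0} \le A\Wr B_i \le A \Wr B_{\mathfrak s_i}$, where $B_{\mathfrak s}$ keeps the finite head and replaces everything from the first infinite factor onwards by a single power $C_{p^{u_t}}^{\mathfrak s}$, and then proving $\var{A \Wr B_{\aleph_0}}=\var{A \Wr B_{\mathfrak s}}$ by a finite-support argument on a would-be separating identity (after passing to the direct wreath product). Your appeal to discrimination of $\A_{p^{u_k}}$ by $C_{p^{u_k}}^{\aleph_0}$ gestures at the same mechanism but does not by itself explain why the later factors of \emph{smaller} exponent are absorbed; one needs the explicit enlargement $B_i \le B_{\mathfrak s_i}$. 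A further technical slip: the prime-local reduction should be to $\var{S_p \Wr B_i(p)}$, with $S_p$ the Sylow $p$-subgroup of the relatively free group $F_c(\var{A})$, not to $\var{A_i \Wr B_i(p)}$ --- since $A_i$ need not be a $p$-group, the latter is not a $p$-group and the $p$-local classification cannot be applied to it.

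The genuine gap is in the necessity direction. Saying that ``Ulm-type invariants carry through the wreath product'' or that ``a suitably chosen element whose support mirrors the mismatch separates the two varieties'' names no invariant of the \emph{variety} $\var{A\Wr B_i}$ that detects the mismatch, and the obvious candidates all fail: the paper's examples exhibit pairs with non-equivalent $B_i$ whose wreath products have the same nilpotency class, the same solubility length and the same exponent. The whole difficulty of the theorem is precisely that non-isomorphic $B_1, B_2$ can generate identical varieties $\var{A\Wr B_i}$, so no argument that merely transports isomorphism invariants of $B_i$ can work. The paper's separating invariant is membership in the product variety $\Ni_{c_2}\B_{p^{w-1}}$, where $w=u_t=v_t$ is the exponent of the first discrepant factor: one bounds the nilpotency class of $P_2=\left(A\Wr B_2\right)^{p^{w-1}}$ from above by embedding it (Kaloujnine--Krasner) into $A^{B_2}\Wr B_2^{p^{w-1}}$, bounds the class of $P_1=\left(A\Wr B_1\right)^{p^{w-1}}$ from below by exhibiting inside it an explicit copy of $A \Wr B_1^{p^{w-1}}$, and then compares the two via Shield's formula, where the discrepancy in the $t$'th factors shows up as a strict inequality in the parameter $e(1)$ and hence in the parameter $a$. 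None of this computation, nor any substitute for it, appears in your proposal, so the ``only if'' half remains unproved.
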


Notice how the roles of the passive and active groups of these wreath products are different: for $A_1, A_2$ we just require that $\var{A_1} = \var{A_2}$, whereas for $B_1, B_2$ we put extra conditions on
structures of their decompositions.

\begin{Corollary}
	\label{Corollary finite groups}%%%%%%%%%%%%%%%%%%%%%%%%%%%%%%%%%%%%%%%%%%%%%
In the notations of Theorem~\ref{Theorem var(Wr) are equal}:
\begin{enumerate}
	\item[\rm (a)] equality  \eqref{Equation varieties are equal} holds for finite groups $B_1,B_2$  if and only if $B_1$ and $B_2$ are isomorphic;
	\item[\rm (b)] equality  \eqref{Equation varieties are equal} never holds if one of the groups $B_1,B_2$ is finite, and the other is infinite.
\end{enumerate}
\end{Corollary}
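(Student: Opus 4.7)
The plan is to derive both parts as immediate consequences of Theorem~\ref{Theorem var(Wr) are equal}, using only the definition of the equivalence $\equiv$ in cases (1) and (3). By that theorem, equality~\eqref{Equation varieties are equal} holds if and only if $B_1(p) \equiv B_2(p)$ for every prime $p$; the argument therefore reduces, for each $p$, to tracking whether the $p$-primary components of $B_1$ and $B_2$ are finite or infinite.

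For part~(a), I would use that $B_1$ and $B_2$ are both finite with $\varr{B_1}=\varr{B_2}$, so all components $B_1(p)$ and $B_2(p)$ are finite, and they are trivial for every prime $p$ that does not divide the common exponent $n$. Hence only case~(1) of the equivalence can occur, and in that case $B_1(p) \equiv B_2(p)$ is literally $B_1(p) \cong B_2(p)$. Because any abelian group of finite exponent is the direct product of its $p$-primary components, the condition ``$B_1(p) \cong B_2(p)$ for each $p$'' is equivalent to $B_1 \cong B_2$. Combined with Theorem~\ref{Theorem var(Wr) are equal}, this yields (a).

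For part~(b), assume without loss of generality that $B_1$ is finite and $B_2$ is infinite; both have the same finite exponent $n$ since $\varr{B_1}=\varr{B_2}$. Only finitely many primes divide $n$, so for $B_2$ to be infinite at least one $p$-primary component $B_2(p)$ must itself be infinite, while the corresponding $B_1(p)$ is finite. For this $p$, neither case~(1) nor case~(2) of the equivalence definition can apply, so we land in case~(3): $B_1(p) \not\equiv B_2(p)$. Theorem~\ref{Theorem var(Wr) are equal} then forces \eqref{Equation varieties are equal} to fail, proving (b).

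I do not expect any substantive obstacle: once Theorem~\ref{Theorem var(Wr) are equal} is granted, both statements are a straightforward unpacking of the three clauses defining $\equiv$. The only bookkeeping is the standard use of the $p$-primary decomposition of abelian groups of finite exponent already established in the paper, together with the observation that two groups of the same finite exponent sharing a variety involve the same set of primes.
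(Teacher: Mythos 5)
Your derivation is correct and coincides with the paper's intended (and essentially implicit) argument: the corollary is stated without a separate proof precisely because it amounts to unpacking clauses (1) and (3) of the definition of $\equiv$ together with the primary decomposition of abelian groups of finite exponent, exactly as you do. Both directions of (a) and the failure of equivalence in (b) are handled correctly, so nothing further is needed.
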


\begin{comment}
\begin{Theorem}
	%\label{Theorem var(Wr) are equal}%%%%%%%%%%%%%%%%%%%%%%%%%%%%%%%%%%%%%%%%%%%%%
	OLD	OLD	OLD	OLD	OLD	OLD	Let $A$ be a non-trivial nilpotent group of  exponent $m$, and let $B_1,B_2$ be non-trivial abelian groups of exponent $n \mathrel{|} m$. Then equality \eqref{Equation varieties are equal} holds for $A, B_1, B_2$ if and only if $B_1(p) \equiv B_2(p)$ for each prime divisor $p$ of $n$.
\end{Theorem}
\end{comment}

%%%%%%%%%%%%%%%%%%%%%%%%%%%%%%%%%%%%%
%%%%%%%%%%%%%%%%%%%%%%%%%%%%%%%%%%%%%
%%%%%%%%%%%%%%%%%%%%%%%%%%%%%%%%%%%%%
\section{\bf The proof of Theorem~\ref{Theorem var(Wr) are equal}}

\noindent
Since we are going to intensively use nilpotency classes of wreath products, let us proceed to introduction of D.~Shield's formula~\cite{Shield nilpotent a, Shield nilpotent b}.
By the well know theorem of G.~Baumslag, a Cartesian wreath product $A \Wr B$ of non-trivial groups $A$ and $B$ is nilpotent if and only if $A$ is a nilpotent $p$-group of finite exponent, and $B$ is a finite $p$-group for a prime number $p$~\cite{Baumslag nilp wr}. The analog of this also holds for direct wreath products.
H.~Liebeck calculated the nilpotency class of $A \Wr B$ for the particular case when the groups  $A$ and $B$ of G.~Baumslag's theorem are abelian~\cite{Liebeck_Nilpotency_classes}.
The complete formula for \textit{any} nilpotent $p$-group $A$ of finite exponent and  \textit{any} finite $p$-group $B$ is found by D.~Shield, and to present it we need to briefly introduce the $K_p$-series. 
See also \cite{K_p-series} where we explain the construction in more details, with illustrative examples. 

For an (arbitrary) group $B$ and the prime number $p$ the $K_p$-series $K_{i,p}(B)$ of $B$ is defined for $i=1, 2, \ldots$ as the product 
$$ 
K_{i,p}(B) = 
\prod \left\{
\gamma_r(B)^{p^j} 
\mathrel{|}
\text{for all $r, j$ such that $r p^j \ge i$}
\right\}\!,
$$
where $\gamma_r(B)$ is the $r$'th term of the lower central series of the group $B$, and $\gamma_r(B)^{p^j}=\langle
g^{p^j}
\mathrel{|}
g\in \gamma_r(B)
\rangle$ is the $p^j$'th power of $\gamma_r(B)$.

Clearly, $K_{1,p}(B) = B$ holds for any $B$. 
If $B$ is abelian, then $\gamma_2(B)=[B,B]=\{1\}$, and so 
in the product above the powers $\gamma_1(B)^{p^j}=B^{p^j}$ of the initial term $\gamma_1(B) = B$ need to be considered only. 
%

%\begin{Example}
%	If $G=C_{p^3} \times C_{p} \times C_{p}$ with $p = 5$, then it is easy to calculate that:
%	$$
%	K_{1,p}(G) = G; 
%	\quad
%	K_{2,p}(G) = \cdots = K_{5,p}(G) \cong C_{p^2}; 
%	$$
%	$$
%	K_{6,p}(G) = \cdots = K_{25,p}(G) \cong C_{p};
%	\quad
%	K_{26,p}(G) \cong \{1\}.
%	$$
%\end{Example}

When $B$ is a finite $p$-group, the following additional parameters are introduced:
let $d$ be the maximal integer such that $K_{d,p}(B) \not= \{1\}$. For each $s=1,\ldots, d$ define $e(s)$ from equality
$
p^{e(s)} = |K_{s,p} / K_{s+1,p}|,
$
and set $a$ and $b$ by the rules:
$
a = 1 + (p-1) \sum_{s=1}^d \left(s \cdot e(s)\vphantom{a^b}\right),
$
$
b = (p-1)d.
$

Finally, let $s(h)$ be defined as follows: $p^{s(h)}$ is the exponent of the $h$'th term $\gamma_h(A)$ of the lower central series of the nilpotent $p$-group $A$ of finite exponent.

Then by Shield's formula \cite{Shield nilpotent b}, the nilpotency class of the wreath product $A \Wr B$ is equal to the maximum
\begin{equation}
\label{EQUATION A wr B class}    
\max_{h = 1, \ldots, \, c} \{
a \, h + (s(h)-1)b
\}.
\end{equation}

\vskip5mm
After these preparations turn back to the  proof for Theorem~\ref{Theorem var(Wr) are equal}.
The first and very simple step to start with is to reduce  equality \eqref{Equation varieties are equal} to its particular case when $A_1 = A_2=A$:
\begin{equation}
\label{Equation varieties are equal ONE A}
\var{A \Wr B_1} = \var{A \Wr B_2}.
\end{equation}

Recall that for any given class $\X$ of groups ${\sf Q}\X$, ${\sf S}\X$, ${\sf C}\X$ denote the classes of all homomorphic images, subgroups, Cartesian products  of groups of $\X$ respectively. By Birkhoff's Theorem~\cite[15.23]{HannaNeumann}, for any class  of groups $\X$ the equality  $\varr{\X}={\sf QSC}\,\X$ holds.

\begin{Lemma}
	\label{LE reduce to case A1=A2}%%%%%%%%%%%%%%%%%%%%%%%%%%%%%%%%%%%%%%%%%%%%%
	For any groups $A_1, A_2$ generating the same variety the equality
	\begin{equation}
	\label{EQ reduce to case A1=A2}
	\var{A_1 \Wr B} = \var{A_2 \Wr B}
	\end{equation}
	holds for any group $B$.
\end{Lemma}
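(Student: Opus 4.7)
My plan is to prove both inclusions $\var{A_1 \Wr B} \sub \var{A_2 \Wr B}$ and $\var{A_2 \Wr B} \sub \var{A_1 \Wr B}$ by symmetry, so it suffices to show, say, that $A_2 \Wr B \in \var{A_1 \Wr B}$. Since $\varr{A_1} = \varr{A_2}$, Birkhoff's Theorem yields $A_2 \in {\sf QSC}\{A_1\}$, so there exist an index set $I$, a subgroup $H \leq A_1^I$, and a normal subgroup $N \normal H$ with $A_2 \cong H / N$. The strategy is then to lift the three operations ${\sf Q}$, ${\sf S}$, ${\sf C}$ through the functor $(-) \Wr B$.

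The three ingredients I would verify separately as simple observations about the (Cartesian) wreath product construction:

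\textbf{(i) Subgroups.} If $H \leq A$, then $H \Wr B$ embeds in $A \Wr B$ via the obvious inclusion $H^B \hookrightarrow A^B$ on the base group, combined with the identity on the top group $B$.

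\textbf{(ii) Quotients.} If $\varphi\colon A \twoheadrightarrow A''$ is a surjection with kernel $N$, then $\varphi$ induces a coordinatewise surjection $A^B \twoheadrightarrow (A'')^B$ which is compatible with the $B$-action. Extending by the identity on $B$ gives a surjection $A \Wr B \twoheadrightarrow A'' \Wr B$.

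\textbf{(iii) Cartesian powers.} The map
$$
A^I \Wr B \;\longrightarrow\; (A \Wr B)^I, \qquad (f, \beta) \;\longmapsto\; \bigl( (f_i, \beta) \bigr)_{i \in I},
$$
where $f \in (A^I)^B$ and $f_i \in A^B$ is defined by $f_i(\alpha) = f(\alpha)_i$, is a well-defined injective homomorphism, because the $B$-action on $(A^I)^B$ is precisely the diagonal of the $B$-actions on each $A^B$. Hence $A^I \Wr B$ is isomorphic to a subgroup of $(A \Wr B)^I$ and therefore lies in $\var{A \Wr B}$.

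Combining these three steps along the chain $A_2 \cong H/N$, $H \leq A_1^I$, I obtain that $A_2 \Wr B$ is a homomorphic image of $H \Wr B$, which sits inside $A_1^I \Wr B$, which in turn embeds into $(A_1 \Wr B)^I$. Thus $A_2 \Wr B \in {\sf QSC}\{A_1 \Wr B\} = \var{A_1 \Wr B}$, and the symmetric argument completes the proof. The only mildly delicate point is verification of (iii) in the \emph{Cartesian} (as opposed to direct) setting, since one must check that the $B$-action on the full Cartesian base group $(A^I)^B$ really matches the diagonal $B$-action on $(A^B)^I$; this is a routine calculation once one identifies $(A^I)^B$ with $(A^B)^I$ as sets.
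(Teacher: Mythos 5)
Your proof is correct and follows essentially the same route as the paper: the paper likewise applies Birkhoff's Theorem to obtain $A_2\in{\sf QSC}\,\{A_1\}$ and then pushes each operator through $(-)\Wr B$, except that it simply cites \cite[22.11]{HannaNeumann} and \cite[Lemma 1.1]{AwrB_paper} for the closure properties that you verify by hand in (i)--(iii). Your verifications, including the identification of $(A^I)^B$ with $(A^B)^I$ under the diagonal $B$-action needed for the Cartesian case, are all sound.
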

\begin{proof} 
	If $A_1 \in \var {A_2}$, then $A_1 \in{\sf QSC}\,\{A_2\}$. 
	Then by 
	\cite[22.11]{HannaNeumann} and
	\cite[Lemma 1.1]{AwrB_paper} we have 
	${A_1 \Wr B} \in \var{A_2 \Wr B}$ for any $B$. The inverse incusion ${A_2 \Wr B} \in \var{A_1 \Wr B}$ is proved analogously.
\end{proof} 

Turning back to notations of Theorem~\ref{Theorem var(Wr) are equal}, we always have the equality $\var{A_1 \Wr B_1} = \var{A_2 \Wr B_1}$ for the groups $A_1, A_2, B_1$. So, we can just denote $A_1= A_2=A$, and reduce considration of the equality \eqref{Equation varieties are equal} to study of \eqref{Equation varieties are equal ONE A}, which is going to be our main objective for the sequel. 
We are going to achieve it in the following steps: first we consider the case of $p$-groups only, and find a few necessary conditions for the equality \eqref{Equation varieties are equal ONE A} for some specific cases of $p$-groups in
Lemma~\ref{Lemma case of finie initial factors},  
Lemma~\ref{Lemma case of finite end infinite factrs}. Then 
Lemma~\ref{Lemma sufficiency for p-groups}
shows that the combination of the collected necessary conditions also is sufficient.
In
Lemma~\ref{Lemma p-groups belong} and in the final proof
we will deduce the general case from the cases obtained for $p$-groups. 

\medskip
Suppose $A, B_1,B_2$ are non-trivial nilpotent $p$-group of finite exponent, and $B_1,B_2$ have the decompositions \eqref{Equation decomposition of B_1(p)} and \eqref{Equation decomposition of B_2(p)} respectively (in which we clearly have $B_1(p)=B_1$, $B_2(p)=B_2$, since we deal with $p$-groups). 

For any groups $X$ and $Y$ of finite exponents we have $\exp{(X \Wr Y)}=\exp{(X)} \cdot \exp{(Y)}$.
Thus, the first easy observation is that, if \eqref{Equation varieties are equal ONE A} holds for $A, B_1,B_2$, then the exponents of $B_1$ and $B_2$ are equal, i.e., $u_1=v_1$. Otherwise, the wreath products  $A \Wr B_1$ and $A \Wr B_2$ would also have distinct exponents, and they would generate distinct varieties. 

If $\mathfrak m_{p^{u_1}}$ and $\mathfrak m_{p^{v_1}}$ are finite and equal, then the first factors $C_{p^{u_1}}^{\mathfrak m_{p^{u_1}}}$ and $C_{p^{v_1}}^{\mathfrak m_{p^{v_1}}}$ in \eqref{Equation decomposition of B_1(p)} and \eqref{Equation decomposition of B_2(p)} are coinciding finite groups. 
For the sequel reserve the letter $t$ to denote the index, for which 
$C_{p^{u_i}}^{\mathfrak m_{p^{u_i}}}$ and $C_{p^{v_i}}^{\mathfrak m_{p^{v_i}}}$ 
in \eqref{Equation decomposition of B_1(p)} and \eqref{Equation decomposition of B_2(p)} 
are coinciding finite factors for each $i=1,\ldots,t-1$, but \textit{not} for $i=t$. 
That is, the $t$'th factors $C_{p^{u_t}}^{\mathfrak m_{p^{u_t}}}$ and $C_{p^{v_t}}^{\mathfrak m_{p^{v_t}}}$ either are non-isomorphic finite groups, or at least one of them is infinite.
Clearly, if $B_1\ncong B_2$, then such a $t$ exists, and the case $t=1$ is not ruled out. 

\begin{Lemma}
	\label{Lemma case of finie initial factors} %%%%%%%%%%%%%%%%%%%%%%%%%%%%%
{\color{red} } In the above circumstances, if the $t$'th factors $C_{p^{u_t}}^{\mathfrak m_{p^{u_t}}}$\! and $C_{p^{v_t}}^{\mathfrak m_{p^{v_t}}}$  are non-isomorphic finite groups, then the equality \eqref{Equation varieties are equal ONE A} does not hold for $A,\,B_1,B_2$.

\end{Lemma}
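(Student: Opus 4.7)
The plan is to argue by contradiction, reducing to a finite situation via D.~Shield's formula.

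First I would form finite truncations $\tilde B_i$ consisting of the first $t$ summands in the decomposition of $B_i$. By hypothesis, the first $t-1$ summands coincide and are finite, and the $t$'th summands are finite (though non-isomorphic), so each $\tilde B_i$ is a finite abelian $p$-group and a direct factor of $B_i$. The projection $B_i \twoheadrightarrow \tilde B_i$ induces an epimorphism $A \Wr B_i \twoheadrightarrow A \Wr \tilde B_i$, whence $A \Wr \tilde B_i \in \var{A \Wr B_i}$.

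Next I would compute the nilpotency class of each $A \Wr \tilde B_i$. By Baumslag's theorem it is nilpotent, and formula~\eqref{EQUATION A wr B class} expresses its class $c_i$ in terms of the $K_p$-series of $\tilde B_i$ together with the exponent sequence $p^{s(h)}$ of $\gamma_h(A)$. Since $\tilde B_i$ is abelian, the $K_p$-series collapses to the descending powers $\tilde B_i^{p^k}$, and the Shield parameters depend only on the Ulm invariants $r_k(\tilde B_i)=\dim_{\F_p}\tilde B_i^{p^{k-1}}/\tilde B_i^{p^k}$. The identical first $t-1$ summands cancel in the difference, while the differing $t$'th summands force at least one index $k$ at which $r_k(\tilde B_1)\neq r_k(\tilde B_2)$. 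A short case analysis on $u_t \neq v_t$ versus $u_t = v_t$ with distinct multiplicities, substituted into Shield's formula, yields $c_1 \neq c_2$; say $c_1>c_2$.

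Finally, assume for contradiction that $\var{A \Wr B_1} = \var{A \Wr B_2}$. Then $A \Wr \tilde B_1$, nilpotent of class $c_1$, must lie in $\var{A \Wr B_2}$. I would close the argument by exhibiting a law of $A \Wr B_2$---derived from the Shield bound $c_2$ on the class of every finite wreath-product section of $A \Wr B_2$ whose passive group ``sees'' only the first $t$ Ulm layers---which is violated by $A \Wr \tilde B_1$ because its own class $c_1$ exceeds $c_2$.

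The main obstacle will be this last step. When $B_2$ is infinite, $A \Wr B_2$ is not itself nilpotent and so satisfies no $\gamma_{c+1}$-law, so the distinguishing identity cannot simply be a commutator-length law. Instead it has to be a Shield-type commutator--power law that captures the finite-section class bound $c_2$ without requiring global nilpotency. Verifying that such a law is indeed satisfied by $A \Wr B_2$ and does fail in $A \Wr \tilde B_1$ is the delicate part, and will presumably draw on the machinery of partial Shield invariants developed in the author's earlier work~\cite{Classification Theorem} together with the reduction in Lemma~\ref{LE reduce to case A1=A2}.
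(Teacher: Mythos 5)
There is a genuine gap, and it sits exactly where you flag the ``main obstacle'': your plan never produces a varietal invariant that separates $\var{A \Wr B_1}$ from $\var{A \Wr B_2}$. Comparing the Shield classes $c_1,c_2$ of the truncations $A\Wr\tilde B_1$ and $A\Wr\tilde B_2$ is not enough, because $c_2$ is \emph{not} an upper bound for the nilpotency classes of groups in $\var{A\Wr B_2}$: the factors of $B_2$ beyond index $t$, which your truncation discards, raise the class of $A\Wr B_2$ itself. The paper's own Example~\ref{Example the powers have distinct classes} makes the failure concrete: for $A=C_3$, $B_1=C_{3^2}^2$, $B_2=C_{3^2}\times C_3^4$ (so $t=1$, $\tilde B_1=C_{3^2}^2$, $\tilde B_2=C_{3^2}$) one gets $c_1=17>c_2=9$, yet $A\Wr B_2$ has class $17$ as well, so $A\Wr\tilde B_1$ satisfies every nilpotency law of $A\Wr B_2$ and no contradiction arises --- even though here both groups are finite and nilpotent, so the difficulty is not, as you suggest, caused by $B_2$ being infinite. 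Your closing sentence asks for a ``Shield-type commutator--power law'' that sees only the first $t$ Ulm layers, but you do not exhibit one, and that law is the entire content of the proof.

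The paper's mechanism is to set $w=u_t=v_t$ and work with the verbal subgroups $P_i=(A\Wr B_i)^{p^{w-1}}$, i.e.\ with membership in the product variety $\Ni_{c}\,\B_{p^{w-1}}$. Raising to the power $p^{w-1}$ annihilates precisely the factors of $B_i$ beyond index $t$, which is the correct substitute for your truncation. An \emph{upper} bound $c_2$ for the class of $P_2$ comes from the Kaloujnine--Krassner embedding $P_2\hookrightarrow A^{B_2}\Wr B_2^{p^{w-1}}$ plus Shield's formula; a \emph{lower} bound for the class of $P_1$ comes from explicitly constructing inside $P_1$ a subgroup $P_1^*\cong A\Wr B_1^{p^{w-1}}$ (via the functions $\pi_l$ with controlled support and the identity $(c_{11}\pi_l)^{p^{w-1}}=c_{11}^{p^{w-1}}\theta_l$), whose Shield parameter $e(1)$ is strictly larger. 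Then $A\Wr B_2\in\Ni_{c_2}\B_{p^{w-1}}$ while $A\Wr B_1\notin\Ni_{c_2}\B_{p^{w-1}}$, which is the separating law you were missing. A minor additional point: the projection $B_i\twoheadrightarrow\tilde B_i$ does not in general induce an epimorphism of wreath products (quotients of the active group do not pass to the wreath product unless $A$ is abelian); since $\tilde B_i$ is a direct factor, hence a subgroup, of $B_i$, you should instead invoke $A\Wr\tilde B_i\le A\Wr B_i$.
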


\begin{proof} We have two options:
	\begin{enumerate}
		\item[\rm (a)] either $u_t = v_t$, and then ${\mathfrak m_{p^{u_t}}} \neq {\mathfrak m_{p^{v_t}}}$, and so we may suppose  ${\mathfrak m_{p^{u_t}}} > {\mathfrak m_{p^{v_t}}}$;
		\item[\rm (b)] or $u_t \neq v_t$, and we may suppose $u_t > v_t$ (the values of ${\mathfrak m_{p^{u_t}}}$ and ${\mathfrak m_{p^{v_t}}}$ are immaterial in this case). 	 
	\end{enumerate}	
	Let us conduct the proof for the first option, and the second can be proved by slight adaptation, by adding to $B_2$ the ``missing'' factor $C_{p^{v_t}}^{\mathfrak m_{p^{v_t}}}$ with $v_t=u_t$ and ${\mathfrak m_{p^{v_t}}}\!\!=0$.	
	
	We suppose the equation \eqref{Equation varieties are equal ONE A} holds, and proceed to arrive to a contradiction. 	
	Denote $u_t = v_t= w$, and set  $P_1=\left(A \Wr B_1 \right)^{p^{w-1}}$ \!\!and $P_2=\left(A \Wr B_2 \right)^{p^{w-1}}$\!\!\!\!\!. \,
	Since the wreath product $A \Wr B_2$ is an extension of its base subgroup $A^{B_2}$ by the active group $B_2$, then 
	$$
	P_2 / (A^{B_2} \cap P_2) \cong (P_2\, A^{B_2}) / A^{B_2} \le
	(A \Wr B_2) / A^{B_2} \cong B_2,
	$$
	i.e., the subgroup $P_2$ of $A \Wr B_2$ is an extension of some subgroup $A^{B_2} \cap P_2$ of $A^{B_2}$ by some subgroup of $B_2$ and, in fact, of $B_2^{p^{w-1}}$\! taking into account the multiplication rule in wreath products. By the Kaloujnine-Krassner theorem~\cite[22.21]{HannaNeumann} $P_2$ can be embedded in the wreath product $A^{B_2} \Wr B_2^{p^{w-1}}$\!\!\!\!\!,\;\; and we can find an \textit{upper} bound for the nilpotency class of $P_2$ by applying the Shield's formula to this wreath product.
	Clearly, 
	\begin{equation}
	\label{EQUATION B_2^{p^{w-1}}} B_2^{p^{w-1}} \!\! \cong C_{p^{v_1-w+1}}^{\mathfrak m_{p^{v_1}}}
	\!\!\times \cdots \times
	C_{p^{v_{t-1}-w+1}}^{\mathfrak m_{p^{v_{t-1}}}}
	\times
	C_{p}^{\mathfrak m_{p^{v_t}}}\!\!\!\!,
	\end{equation}
	and it is just a simple routine computation to find the $K_p$-series for \eqref{EQUATION B_2^{p^{w-1}}}: 
	
	$$K_{1,p}(B_2^{p^{w-1}})=B_2^{p^{w-1}}\!\!\!\!\!;$$
	$$K_{2,p}(B_2^{p^{w-1}}) = \cdots = K_{p,p}(B_2^{p^{w-1}})=
	\left(B_2^{p^{w-1}}\right)^p$$
	because for the indices $i=2,\ldots, p$ the least power $p^j$ for which $p^j\ge i$ is achieved for $j=1$. 
	Clearly,
	$\left(B_2^{p^{w-1}}\right)^p \cong C_{p^{v_1-w}}^{\mathfrak m_{p^{v_1}}}
	\!\!\times \cdots \times
	C_{p^{v_{t-1}-w}}^{\mathfrak m_{p^{v_{t-1}}}}
	$ (notice how the factor $C_{p}^{\mathfrak m_{p^{v_t}}}$\! disappeared as it is of exponent $p$).
	$$K_{p+1,p}(B_2^{p^{w-1}}) = \cdots = K_{p^2,p}(B_2^{p^{w-1}})=
	\left(B_2^{p^{w-1}}\right)^{p^2}$$
	because for the indices $i=p+1,\ldots, p^2$ the least power $p^j$ for which $p^j\ge i$ is achieved for $j=2$. 
	So,  
	$\left(B_2^{p^{w-1}}\right)^{p^2} \!\!\cong C_{p^{v_1-w-1}}^{\mathfrak m_{p^{v_1}}}
	\!\times \cdots \times
	C_{p^{v_{t-1}-w-1}}^{\mathfrak m_{p^{v_{t-1}}}}
	$.

	As the process continues some more and more factors will be lost. And in the last steps we for some $r$ get:
	$$K_{p^r\,+\;1,p}(B_2^{p^{w-1}}) = \cdots = K_{p^{r+1},p}(B_2^{p^{w-1}})=
	\left(B_2^{p^{w-1}}\right)^{p^{r+1}}\!\!\!\! =
	C_{p}^{\mathfrak m_{p^{v_1}}}\!\!\!. $$
	And, finally, $$K_{p^{r+1}\;+\;1,p}(B_2^{p^{w-1}})=K_{p^{r+1}\;+\;2,p}(B_2^{p^{w-1}})=\cdots=\{1\}.$$
	In these notations $d=p^{r+1}$\!. 
	The values of $e(s)$, $s=1,\ldots, d$ are easy to get as
	they are non-zero in the following cases only:
	$$
	|K_{1,p}(B_2^{p^{w-1}})/K_{2,p}(B_2^{p^{w-1}})|=p^{{\mathfrak m_{p^{v_1}}} +\cdots + {\mathfrak m_{p^{v_t}}} }=p^{e(1)},
	$$
	$$
	|K_{p,p}(B_2^{p^{w-1}})/K_{p+1,p}(B_2^{p^{w-1}})|=p^{e(p)},
	$$
	$$
	|K_{p^2,p}(B_2^{p^{w-1}})/K_{p^2+1,p}(B_2^{p^{w-1}})|=p^{e(p^2)},
	$$
	$$
	\cdots \cdots \cdots \cdots \cdots \cdots \cdots \cdots \cdots \cdots \cdots \cdots \cdots \cdots 
	$$
	$$
	|K_{p^{r+1},p}(B_2^{p^{w-1}})/K_{p^{r+1}+1}(B_2^{p^{w-1}})|
	=|K_{d,p}(B_2^{p^{w-1}})/K_{d+1}(B_2^{p^{w-1}})|=p^{e(p^{r+1})}=p^{e(d)}$$
	(by our agreement all these $e(s)$, $s=1,\ldots, d$ are finite).
	
	It remains to compute the values of $a$, $b$, $s(h)$, and to get the nilpotency class of $A^{B_2} \Wr B_2^{p^{w-1}}$ by formula \eqref{EQUATION A wr B class}. We do \textit{not} need write down that class because information relevant for this proof already is collected above. 
	\medskip
	
	Next we construct a specific subgroup $P_1^*$ in $P_1^{\hphantom{*}}$. Let $\{a_l \mathrel{|} l\in \mathcal L\}$ be any generating set of $A$, and choose a set of generators  for the first $t$ factors of $B_1$ in \eqref{Equation decomposition of B_1(p)} as follows. For each $i=1,\ldots,t$ let $c_{ij}$ be a generator of the $j$'th copy of the cycle $C_{p^{u_i}}$ in $C_{p^{u_i}}^{\mathfrak m_{p^{u_i}}}$ for $j = 1,\ldots, {\mathfrak m_{p^{u_i}}}$, i.e.,  $C_{p^{u_i}}^{\mathfrak m_{p^{u_i}}} = \prod_{j=1}^{{\mathfrak m_{p^{u_i}}}} \langle c_{ij} \rangle$. We get ${\mathfrak m}={\mathfrak m_{p^{u_1}}} +\cdots + {\mathfrak m_{p^{u_t}}}$ generators $c_{ij}$ in total,
and each element $b$ in the product of the first $t$ factors of $B_1$ can be written as
\begin{equation}
\label{EQ presentation of b}
b = c_{11}^{\varepsilon_1}\cdots c_{t\, {\mathfrak m_{p^{u_t}}}}^{\varepsilon_{{\mathfrak m}}}
\end{equation}
for some non-negative integer exponents $\varepsilon_1,\ldots,\varepsilon_{{\mathfrak m}}$.
	For each generator $a_l$ define an element $\pi_l$ in the base subgroup $A^{B_1}$ of $A \Wr B_1$ as follows. 
Set 
$\pi_l\,(b)=a_l$, if 	
in presentation \eqref{EQ presentation of b} of $b$ the first exponent	
$\varepsilon_1$ is zero, and all the remaining exponents $\varepsilon_{2},\ldots,\varepsilon_{{\mathfrak m}}$ are less than $p^{w-1}$\!.
Set $\pi_l\,(b)=1$ for all other values of  $b\in B_1$.

Using standard arguments (see, for example, Section 5 in \cite{Embedding theorems for groups 59}) 
	it is easy to verify that
$\left(c_{11}\pi_l\right)^{p^{w-1}}\!\!\!=\left(c_{11}\right)^{p^{w-1}}\!\theta_l$ where
$\theta_l\,(b)=a_l$, if 	
in presentation \eqref{EQ presentation of b}  all the  exponents $\varepsilon_{1},\ldots,\varepsilon_{{\mathfrak m}}$ are less than $p^{w-1}$\hskip-5mm,\hskip5mm and 
 $\theta_l\,(b)=1$ for  other values   $b\in B_1$.
Since $\theta_l=
	\left(c_{11}\right)^{-p^{w-1}}  \! \! 
	\left(c_{11}\pi_l\right)^{p^{w-1}}$\hskip-5mm,\hskip5mm we have $\theta_l \in P_1=\left(A \Wr B_1 \right)^{p^{w-1}}$\hskip-5mm,\hskip5mm
	and elements $\theta_l$ together with all the powers $c_{ij}^{p^{w-1}}$ are generating a subgroup $P_1^*$  in $P_1$. Each element $c_{ij}^{p^{w-1}}$\! is generating a cyclic subgroup $\langle c_{ij}^{p^{w-1}} \rangle$ of order $p^{u_i-w+1}$ inside the respective cyclic subgroup $\langle c_{ij} \rangle \cong C_{p^{u_i}}$.
Taking into account the ``$p^{w-1}$ steps shifting effect'' of the elements $c_{i'j'}^{p^{w-1}}$ on the base subgroup for any indices $i',j'$,\; it is easy to see that  $P_1^*$  is isomorphic to the wreath product:
	\begin{equation}
	\label{EQ new wreath product}
	A \Wr B_1^{p^{w-1}}  \!\cong\, A \Wr 
	\left( C_{p^{u_1-w+1}}^{\mathfrak m_{p^{u_1}}}
	\!\!\times \cdots \times
	C_{p^{u_{t-1}-w+1}}^{\mathfrak m_{p^{u_{t-1}}}}
	\times
	C_{p}^{\mathfrak m_{p^{u_t}}}\!
	\right)
	=
	A \Wr B_1^*
	.
	\end{equation}
	If we now by Shield's formula compute the nilpotency class of \eqref{EQ new wreath product}, it will also be a \textit{lower} bound for the nilpotency class of $P_1$.
	
	The $K_p$-series for the active group $B_1^*$ is: 
	$$K_{1,p}(B_1^*)=B_1^*,$$
	$$K_{2,p}(B_1^*) = \cdots = K_{p,p}(B_1^*)=
	\left(B_1^*\right)^p \cong C_{p^{u_1-w}}^{\mathfrak m_{p^{u_1}}}
	\!\!\times \cdots \times
	C_{p^{u_{t-1}-w}}^{\mathfrak m_{p^{u_{t-1}}}}$$
	(the factor $C_{p}^{\mathfrak m_{p^{u_t}}}$ again disappeared as it is of exponent $p$).
	$$K_{p+1,p}(B_1^*) = \cdots = K_{p^2,p}(B_1^*)=
	\left(B_1^*\right)^{p^2} \cong C_{p^{u_1-w-1}}^{\mathfrak m_{p^{u_1}}}
	\!\!\times \cdots \times
	C_{p^{u_{t-1}-w-1}}^{\mathfrak m_{p^{u_{t-1}}}}.$$
	And in the last steps we for the same $r$ get:
	$$K_{p^r+1,p}(B_1^*) = \cdots = K_{p^{r+1},p}(B_1^*)=
	\left(B_1^*\right)^{p^{r+1}}\!\!\!\! =
	C_{p}^{\mathfrak m_{p^{u_1}}}.$$
	And, finally, $K_{p^{r+1}+1,p}(B_1^*)=K_{p^{r+1}+2,p}(B_1^*)=\cdots=\{1\}$.
	
	We again have the same $d=p^{r+1}$. 
	The only non-zero values of $e(s)$ for  $s=1,\ldots, d$ are:
	$$
	|K_{1,p}(B_1^*)/K_{2,p}(B_1^*)|=p^{{\mathfrak m_{p^{u_1}}} +\cdots + {\mathfrak m_{p^{u_t}}} }=p^{e(1)},
	$$
	$$
	|K_{p,p}(B_1^*)/K_{p+1,p}(B_1^*)|=p^{e(p)},
	$$
	$$
	|K_{p^2,p}(B_1^*)/K_{p^2+1,p}(B_1^*)|=p^{e(p^2)},
	$$
	$$
	\cdots \cdots \cdots \cdots \cdots \cdots \cdots \cdots \cdots \cdots \cdots \cdots \cdots \cdots 
	$$
	$$
	|K_{p^{r+1},p}(B_1^*)/K_{p^{r+1}+1}(B_1^*)|
	=|K_{d,p}(B_1^*)/K_{d+1}(B_1^*)|=p^{e(p^{r+1})}=p^{e(d)}.
	$$
	Then we can compute the values of $a$, $b$, $s(h)$, and get the nilpotency class of $W^*$ by formula \eqref{EQUATION A wr B class}. 
	
	Let us compare the parameters $e(s)$, $a$, $b$, $s(h)$ that we above calculated for the wreath products $A^{B_2} \Wr B_2^{p^{w-1}}$ \!\!\!and  $A \Wr B_1^*$ respectively.
	The parameter $e(1)$ is larger for the wreath product  $A \Wr B_1^*$ than for $A^{B_2} \Wr B_2^{p^{w-1}}$ because ${\mathfrak m_{p^{u_t}}} > {\mathfrak m_{p^{v_t}}}$, and so the direct factors $C_p$ appear in $B_1^*$ strictly more times than in $B_2^{p^{w-1}}$\!\!\!\!\!.\;\;
	The parameters $e(2), e(3), \ldots , e(d)$ will be the same for $A^{B_2} \Wr B_2^{p^{w-1}}$\! and for $A \Wr B_1^*$ because $u_i=v_i$ for all $i=1,\ldots,t-1$.
	
	Since $a$ is calculated as $a = 1 + (p-1) \sum_{s=1}^d \left(s \cdot e(s)\vphantom{a^b}\right)$, we have that the value of the parameter $a$ is strictly larger for $A \Wr B_1^*$ rather than for $A^{B_2} \Wr B_2^{p^{w-1}}$\!\!\!\!\!.\;\;
	Since $b = (p-1)d$, this parameter is the same for both wreath products.
	
	It remains to compare the parameters $s(h)$. For $A^{B_2} \Wr B_2^{p^{w-1}}$\!\! the value of $s(h)$ is set so that $p^{s(h)}$ is the exponent of the $h$'th term $\gamma_h(A^{B_2})$ of $A^{B_2}$.
	And for $A \Wr B_1^*$ the value of $s(h)$ is set so that $p^{s(h)}$ is the exponent of  $\gamma_h(A)$ of $A$. Since these exponents clearly are equal, the parameters $s(h)$ also are the same for both wreath products.
	
	We get that the only value that is different in Shield's formula applied to two wreath products is $e(1)$, and it is strictly larger for $A \Wr B_1^*$. Thus, the nilpotency class of $A \Wr B_1^*$ is strictly larger than that of $A^{B_2} \Wr B_2^{p^{w-1}}$\!\!\!.
	In other words, a lower bound for the class of $P_1$ is larger than an upper bound for $P_2$. 
	
	If $c_2$ is the nilpotency classes of $P_2$, then 
	$P_2=\left(A \Wr B_2 \right)^{p^{w-1}}$\!\! is in nilpotent variety $ \Ni_{c_2}$, and thus, the group $A \Wr B_2$ (together with the variety it generates) is in product  $\Ni_{c_2} \B_{p^{w-1}}$ of $ \Ni_{c_2}$ and of Burnside variety $\B_{p^{w-1}}$. But as we saw $A \Wr B_1$ does not belong to this product. Thus, also $\varr{A \Wr B_1} \neq \varr{A \Wr B_2}$. Contradiction.
\end{proof}

\begin{Remark} 
	With some more routine in proofs we could show that the upper and lower bounds found above, in fact, exactly are the nilpotency classes of $A \Wr B_1$ and $A \Wr B_2$ respectively. We, however, refrain from doing that, as the proof above already accomplishes the task we needed.\;
Also notice that even if $B_1$ and $B_2$ after their initial $t$ finite factors contained some \textit{infinite} factors $C_{p^{u_i}}^{\mathfrak m_{p^{v_i}}}$ or $C_{p^{v_i}}^{\mathfrak m_{p^{u_i}}}$\! for $i>t$, they played no role in Lemma~\ref{Lemma case of finie initial factors}, because all such factors disappeared in the $(p^{w-1})$'th powers $B_1^{p^{w-1}}$\!\!\! and $B_2^{p^{w-1}}$\!\!\!\!.
\end{Remark}

We can already deduce:

\begin{Proposition}
	\label{Corollary finite p-groups}%%%%%%%%%%%%%%%%%%%%%%%%%%%%%%%%%%%%%%%%%%%%%
If $A$ is a non-trivial nilpotent $p$-group of finite exponent, and $B_1,B_2$ are non-trivial finite abelian  $p$-groups, then the equality \eqref{Equation varieties are equal ONE A} holds for $A,\,B_1,B_2$ if and only if $B_1 \cong B_2$.
\end{Proposition}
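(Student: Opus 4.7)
I would split the argument into the two implications of the biconditional. The \emph{if} direction is immediate, since any isomorphism $B_1 \cong B_2$ lifts componentwise to an isomorphism $A \Wr B_1 \cong A \Wr B_2$, and hence these two wreath products generate the same variety.

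For the \emph{only if} direction I would proceed by contradiction: assume \eqref{Equation varieties are equal ONE A} holds but $B_1 \not\cong B_2$. As $B_1$ and $B_2$ are finite abelian $p$-groups, their decompositions \eqref{Equation decomposition of B_1(p)} and \eqref{Equation decomposition of B_2(p)} consist of finitely many finite cyclic factors, and by the classical uniqueness of such decompositions the two type sequences must disagree at some position; let $t$ be the first such index.

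At this $t$ the factors $C_{p^{u_t}}^{\mathfrak m_{p^{u_t}}}$ and $C_{p^{v_t}}^{\mathfrak m_{p^{v_t}}}$ are non-isomorphic finite groups, fitting one of three patterns: either $u_t \neq v_t$; or $u_t = v_t$ with distinct finite multiplicities; or one of the two sequences has already terminated while the other has not, which is handled by adjoining a trivial ``missing'' factor exactly as in the proof of Lemma~\ref{Lemma case of finie initial factors}. In every case the hypothesis of Lemma~\ref{Lemma case of finie initial factors} is met and yields $\var{A \Wr B_1} \neq \var{A \Wr B_2}$, the desired contradiction.

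I do not anticipate any genuine obstacle here. The hard work---the Shield-formula comparison of the nilpotency classes of $(A \Wr B_1)^{p^{w-1}}$ and $(A \Wr B_2)^{p^{w-1}}$ via the Kaloujnine--Krassner embedding---has already been executed inside Lemma~\ref{Lemma case of finie initial factors}. The present proposition is essentially a clean specialization of that lemma to the case in which both active groups happen to be finite, so all that remains is to observe that $B_1 \not\cong B_2$ automatically triggers the lemma's hypothesis, which the uniqueness of the primary decomposition of finite abelian $p$-groups supplies at once.
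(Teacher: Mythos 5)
Your proposal is correct and matches the paper's own argument: sufficiency is the trivial observation that $B_1 \cong B_2$ gives $A \Wr B_1 \cong A \Wr B_2$, and necessity is exactly an application of Lemma~\ref{Lemma case of finie initial factors} at the first index where the (unique) primary decompositions of $B_1$ and $B_2$ disagree. The paper states this in two lines; your slightly more detailed case check of how $B_1 \not\cong B_2$ triggers the lemma's hypothesis is the same reasoning spelled out.
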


\begin{proof}
Sufficiency of the condition is evident as $B_1 \cong B_2$ implies $A \Wr B_1 \cong A \Wr B_2$. And necessity follows from the above lemma.
\end{proof}

It is time to allow one or both of the $t$'th factors in $B_1$ or $B_2$ to be \textit{infinite}, preserving all other conditions and \textit{excluding} the case covered by previous lemma (when the $t$'th factors are finite non-isomorphic groups).
We are not ruling out the option $t=1$, i.e., the groups $B_1$ or $B_2$ may start by an infinite factor.

\begin{Lemma}
	\label{Lemma case of finite end infinite factrs} %%%%%%%%%%%%%%%%%%%%%%%%%%%%%
In the above circumstances, if the $t$'th factors $C_{p^{u_t}}^{\mathfrak m_{p^{u_t}}}$\!\! and $C_{p^{v_t}}^{\mathfrak m_{p^{v_t}}}$  are not both infinite groups of the same exponent, then the equality \eqref{Equation varieties are equal ONE A} does not hold for $A,\,B_1,B_2$.
\end{Lemma}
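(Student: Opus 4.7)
The proof will run parallel to that of Lemma~\ref{Lemma case of finie initial factors}, reducing the problem by Kaloujnine--Krassner and Shield's formula to a comparison of a single Shield parameter. Assume for contradiction that \eqref{Equation varieties are equal ONE A} holds. By the very definition of the index $t$ (in the paragraph preceding Lemma~\ref{Lemma case of finie initial factors}) the multiplicities $\mathfrak m_{p^{u_i}}=\mathfrak m_{p^{v_i}}$ for $i<t$ are finite and equal. By the hypothesis of the present lemma the $t$'th factors of $B_1$ and $B_2$ are either of distinct exponents, or of the same exponent with exactly one of the multiplicities $\mathfrak m_{p^{u_t}},\mathfrak m_{p^{v_t}}$ infinite (the subcase where both are finite and non-isomorphic is already disposed of by Lemma~\ref{Lemma case of finie initial factors}, and the subcase where both are infinite of the same exponent is excluded by hypothesis). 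Since the claim is symmetric in $B_1$ and $B_2$, I may swap them if needed to arrange $u_t\ge v_t$; moreover, when $u_t=v_t$, I arrange that $\mathfrak m_{p^{u_t}}$ is the infinite one.

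Set $w=u_t$. Then $B_2^{p^{w-1}}$ is a finite abelian $p$-group: positions $i<t$ contribute finite direct factors $C_{p^{v_i-w+1}}^{\mathfrak m_{p^{v_i}}}$; position $i=t$ either contributes nothing (if $v_t<w$) or the finite factor $C_p^{\mathfrak m_{p^{v_t}}}$ (if $v_t=w$); and positions $i>t$, having $v_i<v_t\le w$, disappear after raising to the $p^{w-1}$'th power. Hence the Kaloujnine--Krassner embedding $P_2=(A\Wr B_2)^{p^{w-1}}\le A^{B_2}\Wr B_2^{p^{w-1}}$ combined with Shield's formula~\eqref{EQUATION A wr B class} yields a finite upper bound, call it $c_2$, for the nilpotency class of $P_2$.

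For the lower bound on $P_1=(A\Wr B_1)^{p^{w-1}}$ I mimic the construction of $P_1^{*}$ from the proof of Lemma~\ref{Lemma case of finie initial factors}, but replace the possibly infinite $t$'th factor $C_{p^{u_t}}^{\mathfrak m_{p^{u_t}}}$ by any finite subproduct $C_{p^{u_t}}^{N}$ with $N$ a positive integer (when $\mathfrak m_{p^{u_t}}$ is itself finite I simply take $N=\mathfrak m_{p^{u_t}}$). The resulting subgroup $P_1^{**}\subseteq P_1$ is isomorphic to the wreath product $A\Wr B_1^{**}$, where $B_1^{**}=C_{p^{u_1-w+1}}^{\mathfrak m_{p^{u_1}}}\!\times\cdots\times C_{p^{u_{t-1}-w+1}}^{\mathfrak m_{p^{u_{t-1}}}}\!\times C_{p}^{N}$ is a finite abelian $p$-group, so Shield's formula applies. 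Comparing its parameters with those of $A^{B_2}\Wr B_2^{p^{w-1}}$ exactly as in the proof of Lemma~\ref{Lemma case of finie initial factors}, all of the $e(s)$ for $s\ge 2$, together with $b$, $d$ and the $s(h)$, coincide; the only discrepancy is in $e(1)$, which is larger for $A\Wr B_1^{**}$ by $N$ (if $v_t<w$) or by $N-\mathfrak m_{p^{v_t}}$ (if $v_t=w$). When $\mathfrak m_{p^{u_t}}$ is finite this discrepancy is a fixed positive integer and already forces a strict increase of the class; when $\mathfrak m_{p^{u_t}}=\infty$ the parameter $N$ may be chosen arbitrarily large, and the nilpotency class of $A\Wr B_1^{**}$ eventually strictly exceeds $c_2$.

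The concluding step is identical to the end of the proof of Lemma~\ref{Lemma case of finie initial factors}: one has $A\Wr B_2\in\Ni_{c_2}\B_{p^{w-1}}$, whereas $P_1$ contains a subgroup of nilpotency class strictly greater than $c_2$, so $A\Wr B_1\notin\Ni_{c_2}\B_{p^{w-1}}$, contradicting \eqref{Equation varieties are equal ONE A}. The main delicate point of the plan is the uniform choice $w=u_t$: the finiteness of the multiplicities $\mathfrak m_{p^{u_i}}$ for $i<t$, which is built into the definition of $t$, is precisely what keeps $B_2^{p^{w-1}}$ a finite $p$-group so that Shield's formula supplies a finite upper bound; and the freedom to let $N$ tend to infinity is precisely what absorbs the cases with $\mathfrak m_{p^{u_t}}$ infinite, which have no analogue in Lemma~\ref{Lemma case of finie initial factors}.
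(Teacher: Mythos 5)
Your proof is correct and follows essentially the same route as the paper's: both replace the infinite $t$'th factor of $B_1$ by a sufficiently large finite subproduct (the paper takes $\mathfrak m_{p^{v_t}}+1$ copies where you let $N$ grow), use the monotonicity $A\Wr B_1'\le A\Wr B_1$ for $B_1'\le B_1$, and then re-run the Shield-formula comparison of Lemma~\ref{Lemma case of finie initial factors} to separate the varieties by $\Ni_{c_2}\B_{p^{w-1}}$. The only difference is organizational: the paper treats the exponent-mismatch subcases as explicit reductions by padding with trivial factors, while you absorb them into the symmetric normalization $u_t\ge v_t$ and the choice of $N$.
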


\begin{proof} 
Without loss of generality we have the following cases of which the first three cover the situations when only one of the $t$'th factors is infinite:

\smallskip
\noindent
\textit{Case 1.} ${\mathfrak m_{p^{u_t}}}$ is infinite, ${\mathfrak m_{p^{v_t}}}$ is finite, and $u_t = v_t =w$ are equal. This time we cannot apply the Shield's formula to $P_1=\left(A \Wr B_1 \right)^{p^{w-1}}$\! because $\left(B_1 \right)^{p^{w-1}}$\! is infinite. 
Consider a new group $B_1'$ which is obtained from $B_1$ by replacing its $t$'th factor $C_{p^{u_t}}^{\mathfrak m_{p^{u_t}}}$ by $C_{p^{u_t}}^{\mathfrak m_{p^{v_t} }+ 1}$\!\!\!\!\!\!\!,\,\,\,\,\,\, i.e., in its direct decomposition we replace the infinitely many copies of  $C_{p^{u_t}}$ by finitely many ${\mathfrak m_{p^{v_t} }\!\!+ \!1}$ copies of the same cycle. The Shield's formula can be applied to the group $\left(A \Wr B_1' \right)^{p^{w-1}}$\!\!\!\!\!\!\!,\,\,\, and by the proof to Lemma~\ref{Lemma case of finie initial factors} we see that its nilpotency class is higher then the class $c_2$ of $P_2$. Thus, as in previous proof, $A \Wr B_1'$ does not belong to the product variety $\Ni_{c_2} \B_{p^{w-1}}$ which contains $A \Wr B_2$.
But $B_1'\le B_1$ and so by~\cite[22.13]{HannaNeumann} or by \cite[Lemma 1.2]{AwrB_paper} $A \Wr B_1'$ is isomorphic to a subgroup of $A \Wr B_1$. So, $A \Wr B_1$ also is not in $\Ni_{c_2} \B_{p^{w-1}}$.

\smallskip
\noindent
\textit{Case 2.} 
${\mathfrak m_{p^{u_t}}}$ is infinite, ${\mathfrak m_{p^{v_t}}}$ is finite, and $u_t > v_t$. We can add in $B_2$ (right before the finite $t$'th factor) a new factor $C_{p^{v_t}}^{\mathfrak m_{p^{v_t}}}$\!\! with $v_t=u_t$ and ${\mathfrak m_{p^{v_t}}}=0$.  We are in a situation already covered by Case 1. 

\smallskip
\noindent
\textit{Case 3.} 
${\mathfrak m_{p^{u_t}}}$ is infinite, ${\mathfrak m_{p^{v_t}}}$ is finite, and $u_t < v_t$. We can add in $B_1$ (right before the infinite factor) a new factor $C_{p^{u_t}}^{\mathfrak m_{p^{u_t}}}$\!\! with $u_t=v_t$ and ${\mathfrak m_{p^{u_t}}}=0$. %The old first infinite factor will now be written as $C_{p^{u_{t+1}}}^{\mathfrak m_{p^{u_{t+1}}}}$\!\!. 
We are in a situation already covered by Lemma~\ref{Lemma case of finie initial factors}.

\smallskip
\noindent
\textit{Case 4.} 
Both ${\mathfrak m_{p^{u_t}}}$ and ${\mathfrak m_{p^{v_t}}}$ are infinite, and $u_t \neq v_t$. 
We can reduce this to one of the previous cases by adding one more finite factor to one of the groups $B_1$ or $B_2$.

\smallskip
There only remains the case when both ${\mathfrak m_{p^{u_t}}}$ and ${\mathfrak m_{p^{v_t}}}$ are finite, and it is ruled out above in the lemma.
\end{proof} 

The series of necessary conditions restricted our consideration to the situation where $B_1 \equiv B_2$, i.e.,
$B_1$ and $B_2$ are of the same exponent; in their decompositions \eqref{Equation decomposition of B_1(p)} and \eqref{Equation decomposition of B_2(p)} the  initial $t-1$ finite factors are the same; $B_1$ and $B_2$ still may differ in their $t$'th factors, and in such a case both $C_{p^{u_t}}^{\mathfrak m_{p^{u_t}}}$ and $C_{p^{v_t}}^{\mathfrak m_{p^{v_t}}}$\!\! are infinite and have the same exponent  
(these two factors need not to be isomorphic, as ${\mathfrak m_{p^{u_t}}}$ and ${\mathfrak m_{p^{v_t}}}$ may be distinct infinite cardinal numbers).
%
%If such infinite factors are not present in  \eqref{Equation decomposition of B_1(p)} and \eqref{Equation decomposition of B_2(p)}, then $B_1$ and $B_2$ simply are isomorphic finite abelian groups. Observe that both for finite and infinite $B_1,B_2$  we have $B_1 \equiv B_2$.
%
Two special cases are not ruled out: we may have $t=1$ (i.e., the initial coinciding finite factors are absent in $B_1$ and $B_2$); or  $t-1=r=s$, i.e., $B_1$ and $B_2$ are finite isomorphic groups.

\begin{Lemma}
	\label{Lemma sufficiency for p-groups} %%%%%%%%%%%%%%%%%%%%%%%%%%%%%
In the above circumstances, with equivalence $B_1 \equiv B_2$, the equality \eqref{Equation varieties are equal ONE A}  holds for $A,\,B_1,B_2$.
\end{Lemma}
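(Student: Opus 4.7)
The plan is to show that both $\var{A\Wr B_1}$ and $\var{A\Wr B_2}$ coincide with a common variety $\var{A\Wr B^*}$. The sub-case where $B_1\cong B_2$ are both finite is trivial, so I focus on infinite $B_1,B_2$ sharing the initial finite part $B_0=\prod_{i=1}^{t-1}C_{p^{u_i}}^{\mathfrak m_{p^{u_i}}}$ and the common exponent $p^{u_t}$ of the first infinite factor. Set $B^*=B_0\times C_{p^{u_t}}^{\aleph_0}$ and write $B_i=B_0\times C_{p^{u_t}}^{\kappa_i}\times R_i$ with $\kappa_i$ an infinite cardinal and $R_i$ of exponent strictly less than $p^{u_t}$. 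Since $B^*\leq B_i$ as a subgroup, the inclusion $A\Wr B^*\leq A\Wr B_i$ yields $\var{A\Wr B^*}\subseteq\var{A\Wr B_i}$ at once.

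For the opposite containment I would first pass to the direct wreath product, using that $\var{A\Wr B}=\var{A\Wrr B}$ always holds in this setting: the value at any single base coordinate of a word applied to a tuple in $A\Wr B$ depends on only finitely many coordinate-values of the base parts of that tuple, so truncating those base parts to finite support preserves any violation and produces a counterexample already in $A\Wrr B$. It therefore suffices to show $A\Wrr B_i\in\var{A\Wr B^*}$, and since a group lies in a variety if and only if each of its finitely generated subgroups does, it is enough to check that every finitely generated subgroup $G\leq A\Wrr B_i$ belongs to $\var{A\Wr B^*}$.

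For such $G=\langle a_1 b_1,\ldots,a_n b_n\rangle$ with $a_j\in A^{(B_i)}$ of finite support and $b_j\in B_i$, let $B''=\langle b_1,\ldots,b_n\rangle$, a finite subgroup of $B_i$. The base part of $G$ is supported on the finite $B''$-invariant set $T=\bigcup_{j=1}^n B''\cdot\mathrm{supp}(a_j)$, so $G$ lies inside the natural semidirect product of $A^{(T)}$ by $B''$ sitting inside $A\Wrr B_i$. Since $B''$ acts freely on $B_i$ by translation, $T$ decomposes into $\ell$ copies of the regular $B''$-set for some finite $\ell$, and identifying those $\ell$ orbit-blocks of $A^{(T)}$ with $\ell$ copies of $A^{B''}$ produces an embedding $G\hookrightarrow(A\Wr B'')^{\ell}$ via the diagonal inclusion of the active $B''$ into $(B'')^{\ell}$. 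Hence $G\in\var{A\Wr B''}$.

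Finally, I would embed $B''$ into $B^*$. The decomposition $B_i=B_0\oplus(C_{p^{u_t}}^{\kappa_i}\times R_i)$ with second summand of exponent at most $p^{u_t}$ yields an injection $B''\hookrightarrow B_0\times Q$, where $Q$ is the image of $B''$ under projection to the second summand; being a finite abelian $p$-group of exponent at most $p^{u_t}$, $Q$ embeds into $C_{p^{u_t}}^m$ for some finite $m$. Therefore $B''\hookrightarrow B_0\times C_{p^{u_t}}^m\leq B^*$, giving $A\Wr B''\leq A\Wr B^*$ and $G\in\var{A\Wr B^*}$. Running the same argument with $B_2$ in place of $B_1$ yields $\var{A\Wr B_2}=\var{A\Wr B^*}=\var{A\Wr B_1}$. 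I expect the most delicate step to be the diagonal embedding $G\hookrightarrow(A\Wr B'')^{\ell}$, which, though a standard wreath-product manipulation, requires careful tracking of the $B''$-orbit decomposition of $T$.
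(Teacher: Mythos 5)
Your proof is correct, and your easy inclusion $\var{A \Wr B^*} \subseteq \var{A \Wr B_i}$ is identical to the paper's (the paper calls the same group $B_{\aleph_0}$). For the reverse inclusion you take a genuinely different route. The paper sandwiches $B_i$ between $B_{\aleph_0}$ and a larger group $B_{\mathfrak s_i}=B_0\times C_{p^{u_t}}^{\mathfrak s_i}$ obtained by replacing every trailing factor of $B_i$ by copies of $C_{p^{u_t}}$, and then proves a cardinal-collapse lemma, $\var{A \Wr B_{\aleph_0}}=\var{A \Wr B_{\mathfrak s}}$ for every infinite $\mathfrak s$, by the finite-support argument applied to a failing identity in the direct wreath product. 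You instead pass to finitely generated subgroups $G$ of $A \Wrr B_i$, use local finiteness of $B_i$ to obtain a finite active group $B''$, embed $G$ into $\left(A \Wr B''\right)^{\ell}$ via the orbit decomposition of the support set $T$, and finally embed $B''$ into $B^*=B_0\times C_{p^{u_t}}^{\aleph_0}$ using that the complement of $B_0$ in $B_i$ has exponent at most $p^{u_t}$. Both arguments rest on the same two facts --- finite supports in the direct wreath product, and embeddability of the trailing factors into powers of $C_{p^{u_t}}$ --- but yours isolates a cleaner general principle (for a locally finite active group $B$ of finite exponent, $\var{A \Wrr B}$ is controlled by the finite subgroups of $B$ up to embedding) and never needs the intermediate group $B_{\mathfrak s_i}$, at the cost of the heavier bookkeeping in the diagonal embedding $G \hookrightarrow \left(A \Wr B''\right)^{\ell}$, which you correctly flag as the delicate step and which does go through since $T$ is a disjoint union of regular $B''$-cosets. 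The paper's cardinal-collapse lemma is less local, but it is reused verbatim in the final, non-$p$-group stage of the proof of Theorem~\ref{Theorem var(Wr) are equal}, which is why the paper organizes the argument that way.
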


\begin{proof} 
If $B_1,B_2$ are finite, then  $B_1 \cong B_2$ holds together with $A \Wr B_1 \cong A \Wr B_2$, and we are left nothing to prove.

Suppose $B_1,B_2$ are infinite, their first coinciding $t-1$ factors are finite, and their $t$'th factors both are infinite and are of the same exponent. Let us first prove an auxiliary fact.
Fix any infinite cardinal number ${\mathfrak s}$, and denote 
$B_{{\mathfrak s}} = C_{p^{u_1}}^{\mathfrak m_{p^{u_1}}}
\!\!\times \cdots 
\times
C_{p^{u_{t-1}}}^{\mathfrak m_{p^{u_{t-1}}}}
\!\!\times
C_{p^{u_t}}^{{\mathfrak s}}$, i.e., $B_{{\mathfrak s}}$ can be obtained from $B_1$ (or from $B_2$) by taking its first $t-1$ finite factors and by adding one more infinite direct factor $C_{p^{u_t}}^{{\mathfrak s}}$. 
In particular, when ${\mathfrak s}=\aleph_0$, we get the group $B_{\aleph_0}$.
The following equality holds:
\begin{equation}
\label{Equation var(A wr B_aleph_0)=var(A wr B_n)}
\varr{A \Wr B_{\aleph_0}}=
\varr{A \Wr B_{{\mathfrak s}}}.
\end{equation}
Indeed, since $\aleph_0 \le {\mathfrak s}$, $B_{\aleph_0}$ is a subgroup of $B_{{\mathfrak s}}$ and, by \cite[22.13]{HannaNeumann}  or by \cite[Lemma 1.2]{AwrB_paper} $A \Wr B_{\aleph_0}$ is a subgroup of $A \Wr B_{{\mathfrak s}}$, and so $\varr{A \Wr B_{\aleph_0}} \subseteq \varr{A \Wr B_{{\mathfrak s}}}$. 
If the inverse inclusion does \textit{not} hold, then there is a word $v(x_1,\ldots,x_k)$ in the absolutely free group $F_k$, such that $v(x_1,\ldots,x_k) \equiv 1$ is an identity for the group $A \Wr B_{\aleph_0}$ but \textit{not} for the larger group $A \Wr B_{{\mathfrak s}}$, that is, there are some elements $g_1,\ldots,g_k \in A \Wr B_{{\mathfrak s}}$ such that $v(g_1,\ldots,g_k) \neq 1$.

Each $g_j$, $j=1,\ldots,k$, is of a form $g_j=b_j \theta_j$ where $b_j \in B_{{\mathfrak s}}$ and $\theta_j \in A^{B_{{\mathfrak s}}}$.
For any groups their Cartesian wreath product and direct wreath product always generate the same variety of groups \cite[22.31]{HannaNeumann}. Thus,  we may suppose $g_1,\ldots,g_k$ already are in the  direct wreath product, that is, the values $\theta_j(d)$  are non-trivial for at most finitely many elements $d_1,\ldots,d_l\in B_{{\mathfrak s}}$.
Moreover, since the product
$B_{{\mathfrak s}} = C_{p^{u_1}}^{\mathfrak m_{p^{u_1}}}
\!\!\times \cdots 
\times
C_{p^{u_{t-1}}}^{\mathfrak m_{p^{u_{t-1}}}}
\!\!\times
C_{p^{u_t}}^{{\mathfrak s}}$
also is direct,
the evolved elements $d_1,\ldots,d_l$ and $b_1,\ldots,b_k$ inside this direct product have at most finitely many non-trivial coordinates in respective copies of the cycles in $C_{p^{u_{1}}},\ldots,C_{p^{u_{t-1}}}, C_{p^{u_{t}}}$ in $B_{{\mathfrak s}}$.
In particular, only finitely many coordinates are taken in the factor $C_{p^{u_t}}^{{\mathfrak s}}$. Clearly, keeping some countably many copies of $C_{p^{u_t}}$, and dropping all the remaining copies of that cycle we will get the product $C_{p^{u_t}}^{\aleph_0}$, and $v(g_1,\ldots,g_k) \neq 1$ will still hold in 
$B_{\aleph_0}$.  Contradiction.

Turning to the main proof first notice that $\aleph_0 \le {\mathfrak m_{p^{u_t}}}$ and $\aleph_0 \le {\mathfrak m_{p^{v_t}}}$ and, thus, $B_{\aleph_0}$ is a subgroup both in $B_1$ and $B_2$. So, we have:
\begin{equation}
\label{Equation var(A wr B^aleph) is less}
\varr{A \Wr B_{\aleph_0}} \subseteq
\varr{A \Wr B_{i}}, \quad
i=1,2.
\end{equation}
Next denote by $\overline B_1$ the group obtained from $B_1$ by replacing in its decomposition \eqref{Equation decomposition of B_1(p)} all the factors
$C_{p^{u_j}}^{\mathfrak m_{p^{u_j}}}$ by  
$C_{p^{u_t}}^{\mathfrak m_{p^{u_t}}}$ for all $j=t+1,\ldots,r$.
We similarly define the group $\overline B_2$. It is clear that:
$$
C_{p^{u_t}}^{\mathfrak m_{p^{u_t}}}
\!\!\times 
C_{p^{u_t}}^{\mathfrak m_{p^{u_{t+1}}}}
\!\!\times\cdots \times
C_{p^{u_t}}^{\mathfrak m_{p^{u_r}}}
\!\!\!=
C_{p^{u_t}}^{\mathfrak s_{1}}
,\quad\quad
%%%%%
C_{p^{v_t}}^{\mathfrak m_{p^{v_t}}}
\!\!\times 
C_{p^{v_t}}^{\mathfrak m_{p^{v_{t+1}}}}
\!\!\times\cdots \times
C_{p^{v_t}}^{\mathfrak m_{p^{v_s}}}
\!\!=
C_{p^{v_t}}^{\mathfrak s_{2}}
\!=
C_{p^{u_t}}^{\mathfrak s_{2}},
$$
with 
$\mathfrak s_{1}=\max\{
{\mathfrak m_{p^{u_{t}}}},
{\mathfrak m_{p^{u_{t+1}}}},\ldots,{\mathfrak m_{p^{u_r}}}
\}$ 
and 
$\mathfrak s_{2}=\max\{
{\mathfrak m_{p^{v_{t}}}},
{\mathfrak m_{p^{v_{t+1}}}},\ldots,{\mathfrak m_{p^{v_s}}}
\}$
(recall that $p^{u_t}=p^{v_t}$). 
We see that $\overline B_i$ in fact is $B_{{\mathfrak s}_i}$, $i=1,2$.
On the other hand $B_i$ clearly is a subgroup in $\overline B_i$, and so:
\begin{equation}
\label{Equation var(A wr B^n1) is less}
\varr{A \Wr B_{i}} \subseteq
\varr{A \Wr B_{{\mathfrak s}_i}}, \quad
i=1,2.
\end{equation}
Let now ${\mathfrak s}$ be the maximum of ${\mathfrak s}_1$ and ${\mathfrak s}_2$.
By equalities
\eqref{Equation var(A wr B_aleph_0)=var(A wr B_n)},
\eqref{Equation var(A wr B^aleph) is less} and 
\eqref{Equation var(A wr B^n1) is less}
we have: 
$$
\varr{A \Wr B_{\aleph_0}} = 
\varr{A \Wr B_{i}} = 
\varr{A \Wr B_{{\mathfrak s}_i}}=
\varr{A \Wr B_{{\mathfrak s}}}
, \quad
i=1,2.
$$
\end{proof}

The lemmas~\ref{Lemma case of finie initial factors}--\ref{Lemma sufficiency for p-groups} already prove the restricted version of Theorem~\ref{Theorem var(Wr) are equal} for $p$-groups:

\begin{Proposition}
	\label{Proposition theorem for p-groups}%%%%%%%%%%%%%%%%%%%%%%%%%%%%%%%%%%%%%%%%%%%%%
	Let $A$ be a non-trivial nilpotent $p$-group of finite exponent, and let $B_1,B_2$ be non-trivial abelian $p$-groups of finite exponents. Then equality \eqref{Equation varieties are equal ONE A} holds for $A, B_1, B_2$ if and only if $B_1 \equiv B_2$.
\end{Proposition}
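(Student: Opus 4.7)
The plan is to assemble the proposition from the three preceding lemmas together with a preliminary reduction on exponents. I would first note that $\exp(A \Wr B_i) = \exp(A)\cdot\exp(B_i)$ for $i=1,2$, so \eqref{Equation varieties are equal ONE A} forces $\exp(B_1) = \exp(B_2)$, i.e.\ $u_1 = v_1$; the same equality is built into $B_1 \equiv B_2$. Hence I may work throughout under this common-exponent assumption on both directions of the equivalence.

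Sufficiency is then almost immediate: with $B_1 \equiv B_2$ we are exactly in the hypothesis of Lemma~\ref{Lemma sufficiency for p-groups}, whose argument (via the auxiliary equality \eqref{Equation var(A wr B_aleph_0)=var(A wr B_n)} and a sandwich between $A \Wr B_{\aleph_0}$ and $A \Wr B_{\mathfrak s}$) supplies \eqref{Equation varieties are equal ONE A} directly.

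For necessity I would assume $B_1 \not\equiv B_2$ and locate the smallest index $t$ at which the decompositions \eqref{Equation decomposition of B_1(p)} and \eqref{Equation decomposition of B_2(p)} first differ. If one decomposition happens to be shorter, I would pad it with a zero-multiplicity factor of the appropriate exponent at position $t$, exactly as in Cases~2 and~3 of the proof of Lemma~\ref{Lemma case of finite end infinite factrs}, so that ``position $t$'' is meaningful in both groups. By minimality the first $t-1$ factors coincide as finite groups. I would then split on the nature of the $t$-th factors: if both are finite, they are by assumption non-isomorphic and Lemma~\ref{Lemma case of finie initial factors} gives $\varr{A \Wr B_1} \neq \varr{A \Wr B_2}$; if at least one is infinite but the pair is not ``both infinite of the same exponent,'' Lemma~\ref{Lemma case of finite end infinite factrs} applies. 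The only residual possibility — both $t$-th factors infinite with $u_t = v_t$ — would by clause~(2) of the definition of $\equiv$ force $B_1 \equiv B_2$, contradicting our standing assumption, and so is excluded.

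The main obstacle here is bookkeeping rather than mathematics: one must make sure that the notion of ``first position of disagreement'' is unambiguous when $r \neq s$, and that the padding convention used in the preceding lemmas is adopted consistently. The genuinely technical work — the Shield-formula comparison in Lemma~\ref{Lemma case of finie initial factors} and the cardinality reduction in Lemma~\ref{Lemma sufficiency for p-groups} — has already been carried out, so at this point the proposition reduces to organizing the trichotomy cleanly and invoking the right lemma in each case.
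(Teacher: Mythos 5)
Your proposal is correct and follows the paper's own route: the paper simply observes that Lemmas~\ref{Lemma case of finie initial factors}--\ref{Lemma sufficiency for p-groups} together already yield the proposition, with necessity coming from the two ``$t$-th factor'' lemmas and sufficiency from Lemma~\ref{Lemma sufficiency for p-groups}, exactly as you organize it. Your explicit trichotomy at the first index of disagreement (including the padding convention and the observation that the residual case forces $B_1 \equiv B_2$) just spells out what the paper leaves implicit.
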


Starting by the agreement made after Lemma~\ref{LE reduce to case A1=A2}  our consideration assumed that $A$, $B_1$ and $B_2$ are $p$-groups only. From now on let  $A$ be a non-trivial nilpotent group of class $c$ and of exponent $m$, and let $B_1,B_2$ be non-trivial abelian groups of exponent $n$ such that $n \mathrel{|} m$. 
%
\begin{comment}
According to Birkhoff's Theorem~\cite[15.23, 15.31]{HannaNeumann} for any class  of groups $\X$ the variety $\varr{\X}$ is nothing but the class ${\sf QSC}\,\X$.
%
We denote $\X \Wr \Y = \{ X\Wr Y \,|\, X\in \X, Y\in \Y\}$. 
%
The following two lemmas group a few statements we need (see Proposition 22.11 and Proposition 22.13 in \cite{HannaNeumann}, Lemma 1.1 and Lemma 1.2 in \cite{AwrB_paper}) !!!!!!!!!!

we need below:

\begin{Lemma}
	\label{X*WrY_belongs_var}%%%%%%%%%%%%%%%%%%%%%%%%%%%%%%%%%%%%%%%%%%%%%
	For arbitrary classes $\X$ and $\Y$ of groups and for arbitrary groups $X^*$ and $Y$, where either $X^*\in 
	{\sf Q}\X$, or $X^*\in {\sf S}\X$, or $X^*\in {\sf C}\X$, and where $Y\in \Y$, the group $X^* \Wr
	Y$  belongs to the variety $\var{\X \Wr \Y}$.
\end{Lemma}

\begin{Lemma}
	\label{XWrY*_belongs_var}%%%%%%%%%%%%%%%%%%%%%%%%%%%%%%%%%%%%%%%%%%%%%
	For arbitrary classes $\X$ and $\Y$ of groups and for arbitrary groups $X$ and $Y^*$, where $X\in\X$ and 
	where $Y^*\in {\sf S}\Y$, the group $X \Wr Y^*$  belongs to the variety $\var{\X \Wr \Y}$.
	Moreover, if  $\X$ is a class of abelian groups, then for each  $Y^*\in {\sf Q}\Y$ the group $X
	\Wrr Y^*$  also belongs to $\var{\X \Wr \Y}$.
\end{Lemma}
\end{comment}
%
Denote $\U =\var{A}$, and assume $p_1, \ldots, p_l$ are all the prime divisors of $m$.
Since $\U$ is nilpotent of class $c$, by  \cite[Corollary 35.12]{HannaNeumann} $\U$ is generated by $F = F_c(\U)$. Being a finite nilpotent group $F$ is a direct product of its Sylow $p_i$-subgroups, $i=1,\ldots,l$:
\begin{equation}
\label{EQUATION_sylow_factors}    
F = S_{p_1} \times \cdots \times S_{p_l},
\end{equation}
and all primes $p_i$ do participate: none of $S_{p_i}$ is trivial, as $\exp(F) = \exp(\U) = \exp(A)$.
Assume $p$ is a prime divisor of  $n$ (and of $m$), i.e., it is one of the primes $p_i$. 
Denote by $p^u$ the highest power of $p$ dividing $n$ (in terms of \eqref{Equation decomposition of B_1(p)} we could write $u=u_1$). 
The following technical fact was proved in \cite{Classification Theorem}:

\begin{Lemma}[see the proof of Lemma 3.3 in \cite{Classification Theorem}]
	\label{Lemma p-groups belong}%%%%%%%%%%%%%%%%%	
Any $p$-group in variety $\var{A \Wr B_1}$ belongs to $\var{S_{p} \! \Wr B_1(p)}$.
\end{Lemma}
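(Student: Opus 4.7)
The plan is to combine two normal-subgroup arguments with the Sylow-type decompositions of $F$ and $B_1$. First, Lemma~\ref{LE reduce to case A1=A2} together with $\var{A}=\var{F}$ reduces the question to showing every $p$-group in $\var{F \Wr B_1}$ lies in $\var{S_p \Wr B_1(p)}$.

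Write $F = S_p \times F'$ with $F' = \prod_{q \ne p} S_q$. The base decomposes as $F^{B_1} = S_p^{B_1} \times (F')^{B_1}$, so $(F')^{B_1}$ is a characteristic direct factor of $F^{B_1}$ and hence normal in $F \Wr B_1$. Since $(F')^{B_1}$ is a $p'$-group and $(F \Wr B_1)/(F')^{B_1} \cong S_p \Wr B_1$, a standard Birkhoff-type argument --- if $G = H/K$ is a $p$-group with $H \le X^I$ and $L \normal X$ is a $p'$-subgroup, then the image of $H \cap L^I$ in $G$ is both a $p'$-group and a subgroup of $G$, hence trivial, so $G$ is a quotient of $H/(H \cap L^I)$, which embeds in $(X/L)^I$ --- yields that every $p$-group in $\var{F \Wr B_1}$ already lies in $\var{S_p \Wr B_1}$.

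Next, write $B_1 = B_1(p) \times B_1(p')$ and let $N = S_p^{B_1} \cdot B_1(p) \le S_p \Wr B_1$. A direct conjugation computation exploiting the abelianness of $B_1$ shows that $N$ is normal in $S_p \Wr B_1$; moreover $N$ is a $p$-group, and the quotient $(S_p \Wr B_1)/N \cong B_1(p')$ is a $p'$-group. The dual Birkhoff-type argument (for a normal $p$-subgroup $M \normal X$ with $X/M$ a $p'$-group, every $p$-group in $\var{X}$ lies in $\var{M}$) then gives that every $p$-group in $\var{S_p \Wr B_1}$ lies in $\var{N}$.

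Finally, regrouping coordinates along $B_1 = B_1(p) \times B_1(p')$ gives $S_p^{B_1} \cong \bigl(S_p^{B_1(p')}\bigr)^{B_1(p)}$ on which $B_1(p)$ acts by shifting its own coordinate, so that $N \cong \bigl(S_p^{B_1(p')}\bigr) \Wr B_1(p)$. Since $S_p^{B_1(p')}$ is a Cartesian power of $S_p$ and therefore lies in $\var{S_p}$, and since the standard diagonal embedding $X^{\alpha} \Wr Y \hookrightarrow (X \Wr Y)^{\alpha}$ is valid for any groups $X, Y$ and any cardinal $\alpha$, we obtain $N \in \var{S_p \Wr B_1(p)}$. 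Composing the three reductions finishes the proof. The main obstacle is the careful verification of the normality of $N$ and of the coordinate identification $N \cong \bigl(S_p^{B_1(p')}\bigr) \Wr B_1(p)$ in the Cartesian (possibly infinite) wreath product setting; both rely in an essential way on the abelianness of $B_1$.
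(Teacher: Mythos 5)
Your argument is correct. Note first that the paper itself gives no proof of this lemma: it is stated as an imported technical fact, with a pointer to the proof of Lemma 3.3 of \cite{Classification Theorem}, so there is no in-text argument to compare against; what you have written is a self-contained substitute, and it holds up. The two Birkhoff-type reductions are both sound: since $(F')^{B_1}$ has finite exponent coprime to $p$, for any $p$-group $G=H/K$ with $H\le (F\Wr B_1)^I$ the normal subgroup $H\cap ((F')^{B_1})^I$ maps onto a normal $p'$-subgroup of $G$, hence into $K$, giving $G\in\var{S_p\Wr B_1}$; dually, since $(S_p\Wr B_1)/N\cong B_1(p')$ has finite exponent coprime to $p$, the quotient $H/(H\cap N^I)K$ is simultaneously a $p$-group and a $p'$-group, hence trivial, giving $G\in\var{N}$. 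The normality of $N$ is immediate because $N$ is the full preimage of $B_1(p)\normal B_1$ under the projection onto the top group, and your currying identification $N\cong S_p^{B_1(p')}\Wr B_1(p)$ together with the containment $X^{\alpha}\Wr Y\in\var{X\Wr Y}$ (this is exactly \cite[22.11]{HannaNeumann}, or Lemma 1.1 of \cite{AwrB_paper}, for the operator ${\sf C}$) closes the argument. This is very much in the spirit of the machinery the paper does use elsewhere (Sylow and primary decompositions plus the ${\sf QSC}$ operators), so I would not call it a divergent route; the one small presentational remark is that in your second reduction the hypothesis that $N$ is a $p$-group is not actually needed --- only that the quotient $B_1(p')$ has finite exponent prime to $p$ --- though it does no harm.
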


The analog of this lemma holds for $p$-groups in variety $\var{A \Wr B_2}$ also.

\smallskip
In order to prove the necessity part of Theorem~\ref{Theorem var(Wr) are equal} (supposing by our agreement made after Lemma~\ref{LE reduce to case A1=A2} that $A_1=A_2=A$), assume the equality \eqref{Equation varieties are equal ONE A} holds for $A, B_1, B_2$. Fix any $p$ dividing $n$, and let $P$ be any group from the variety $\var{S_{p} \! \Wr B_1(p)}$. 
Since the group $F$ and, thus, also $S_{p}$ are in $\U$,  they can be obtained from $A$ using the operations ${\sf Q,S,C}$. Thus, by \cite[Lemma 1.1]{AwrB_paper} and
\cite[Lemma 1.2]{AwrB_paper} (see also \cite[22.11]{HannaNeumann}, \cite[22.13]{HannaNeumann}), $P$ belongs to $\varr{A \Wr B_1}$. By assumption, the latter is equal to $\varr{A \Wr B_2}$. Since $P$ clearly is a $p$-group, and it is in $\varr{A \Wr B_2}$, the group $P$ also belongs to $\var{S_{p} \! \Wr B_2(p)}$ by Lemma~\ref{Lemma p-groups belong}. In the same way we can show that any group from $\var{S_{p} \! \Wr B_2(p)}$ also is in $\var{S_{p} \! \Wr B_1(p)}$, and so:
\begin{equation}
\label{EQUATION_equal subvarietes of p-groups}    
	\var{S_{p} \! \Wr B_2(p)} = \var{S_{p} \! \Wr B_1(p)}.
\end{equation}
Thus, according to Proposition~\ref{Proposition theorem for p-groups}, we get $B_1(p) \equiv B_2(p)$ for any $p \mathrel{|} n$.

\smallskip

To prove the sufficiency part of Theorem~\ref{Theorem var(Wr) are equal} suppose $B_1(p_i) \equiv B_2(p_i)$ for any of the prime divisors $p_1,\ldots,p_h$ of $n$, which means that the decompositions \eqref{Equation decomposition of B_1(p)} and \eqref{Equation decomposition of B_2(p)} for groups $B_1(p_i)$ and $B_2(p_i)$ both start by some  coinciding \textit{finite} factors
$C_{p_i^{u_1}}^{\mathfrak m_{p_i^{u_1}}}
\!\!\times \cdots 
\times
C_{p_i^{u_{t_i-1}}}^{\mathfrak m_{p_i^{u_{t_i-1}}}}$ and 
$C_{p_i^{v_1}}^{\mathfrak m_{p_i^{v_1}}}
\!\!\times \cdots 
\times
C_{p_i^{v_{t_i-1}}}^{\mathfrak m_{p_i^{v_{t_i-1}}}}$
perhaps followed by  \textit{infinite} factors
$C_{p_i^{u_{t_i}}}^{\mathfrak m_{p_i^{u_{t_i}}}}$\! and  $C_{p_i^{v_{t_i}}}^{\mathfrak m_{p_i^{v_{t_i}}}}$\! respectively (the exponents $p_i^{u_{t_i}}$ and $p_i^{v_{t_i}}$ are equal, and ${\mathfrak m_{p_i^{u_{t_i}}}}$ and ${\mathfrak m_{p_i^{v_{t_i}}}}$ may be any infinite cardinals, and the case ${t_i}=1$ is not ruled out).

Now we are going to apply the idea and the notation from the proof of Lemma~\ref{Lemma sufficiency for p-groups}. 
Using the notation $B_{{\mathfrak s}}$ introduced for any cardinal number ${\mathfrak s}$ define 
$$
B_{{\mathfrak s}}^*=\prod_{i=1}^h B_1(p_i)_{{\mathfrak s}}
=\prod_{i=1}^h B_2(p_i)_{{\mathfrak s}},
$$
i.e., for each $p_i$ we take the first \textit{finite} factors from the  decomposition of $B_1(p_i)$, and add one more factor $C_{p_i^{u_{t_i}}}^{\mathfrak s}$
(since $B_1(p_i) \equiv B_2(p_i)$, we clearly have $B_1(p_i)_{{\mathfrak s}} \equiv B_2(p_i)_{{\mathfrak s}}$).

It is clear that taking ${\mathfrak s}=\aleph_0$ we get the group  $B_{\aleph_0}^*$ which is contained as a subgroup in both $B_1$ and $B_2$ (since an infinite ${\mathfrak m_{p_i^{v_{t_i}}}}$ is greater than or equal to $\aleph_0$ for any $i$).
On the other hand, taking ${\mathfrak s}$ to be the maximum of all cardinals ${\mathfrak m_{p_i^{v_{t_i+1}}}},\; {\mathfrak m_{p_i^{v_{t_i+2}}}},\ldots$ for all $i$ we get the group $B_{{\mathfrak s}}^*$ which contains isomorphic copies of both $B_1$ and $B_2$. By \cite[22.13]{HannaNeumann} or by \cite[Lemma 1.2]{AwrB_paper}, we have
$$
A \Wr B_{\aleph_0}^*
\le
A \Wr B_1
\le
A \Wr B_{{\mathfrak s}}^*,
\quad\quad
A \Wr B_{\aleph_0}^*
\le
A \Wr B_2
\le
A \Wr B_{{\mathfrak s}}^*.
$$
A repetition of arguments about the identity $v(x_1,\ldots,x_k) \equiv 1$ in the proof of Lemma~\ref{Lemma sufficiency for p-groups} shows that the wreath products $A \Wr B_{\aleph_0}^*$ and $A \Wr B_{{\mathfrak s}}^*$ generate the same variety. Thus, also $A \Wr B_1$ and $A \Wr B_2$ generate the same variety of groups.

The proof of Theorem~\ref{Theorem var(Wr) are equal} is completed.

%
%%%%%%%%%%%%%%%%%%%%%%%%%%%%%%%%%%%%%
%%%%%%%%%%%%%%%%%%%%%%%%%%%%%%%%%%%%%
\section{\bf Examples}
\label{SE Examples}

\begin{Example} 
	\label{Example the powers have distinct classes}
Take
	$A_1=A_2=A=C_3$,\, $B_1=C_{3^2}^2$,\, $B_2=C_{3^2}\times C_3^4$. For the group $B_1$ we have 
	$K_{1,3}(B_1)=B_1=C_{3^2}^2$,\;
	$K_{2,3}(B_1)=K_{3,3}(B_1)=B_1^{3^1}=C_{3}^2$,\;
	$K_{4,3}(B_1)=K_{5,3}(B_1)=\cdots =\{1\}$.\;
	This means $d=3$;\; 
	$e(1)=2$, since $|K_{1,3}(B_1) / K_{2,3}(B_1)|=3^2$,\;
	$e(2)=0$ since $|K_{2,3}(B_1) / K_{3,3}(B_1)|=1=3^0$,\;
	$e(3)=2$ since $|K_{3,3}(B_1) / K_{4,3}(B_1)|=3^2$.\;
	So, we have $a=1+2(1\cdot 2 + 2\cdot 0 + 3 \cdot 2)=17$,\;
	$b=(3-1)3=6$.\;
	Also $h=1$ (the nilpotency class of $A$) and the exponent of $\gamma_1(A)=A$ is $3^1$, and we have $s(1)=1$. Thus, the nilpotency class of $A \Wr B_1$ is $17\cdot 1 + (1-1)6=17$.
	
	Now do the same starting by $B_2$. We have 
	$K_{1,3}(B_2)=B_2=C_{3^2}\times C_3^4$,\;
	$K_{2,3}(B_2)=K_{3,3}(B_2)=B_2^{3^1}=C_{3}$,\;
	$K_{4,3}(B_2)=K_{5,3}(B_2)=\cdots =\{1\}$.\;
	We again have $d=3$;\; and then
	$e(1)=5$, since $|K_{1,3}(B_2) / K_{2,3}(B_2)|=3^5$,\;
	$e(2)=0$, since $|K_{2,3}(B_2) / K_{3,3}(B_2)|=1=3^0$,\;
	$e(3)=1$, since $|K_{3,3}(B_2) / K_{4,3}(B_2)|=3^1$.\;
	So we have $a=1+2(1\cdot 5 + 2\cdot 0 + 3 \cdot 1)=17$,\;
	$b=(3-1)3=6$.\;
	Again $s(1)=1$. So, the nilpotency class of $A \Wr B_2$ is $17\cdot 1 + (1-1)6=17$.
	
	We got that $A \Wr B_1$ and $A \Wr B_2$ have the same nilpotency class $17$.
Moreover, these wreath products both have the same length of solubility $2$, and the same exponent $27=3 \cdot 3^2$. So, based only on nilpotency class, length of solubility, exponent we cannot yet deduce if the varieties 
$\var{A \Wr B_1}$ and $\var{A \Wr B_2}$ are distinct subvarieties in $\A_3 \A_9$.

However, by Theorem~\ref{Theorem var(Wr) are equal} (in fact, by Lemma~\ref{Lemma case of finie initial factors} already), we have sharper estimates to deduce that $\var{A \Wr B_1} \neq \var{A \Wr B_2}$ (the first factors in which $B_1$ and $B_2$ differ are the initial factors $C_{p^{u_1}}^{\mathfrak m_{p^{u_1}}}=C_{3^2}^2$ for $B_1$, and $C_{p^{v_1}}^{\mathfrak m_{p^{v_1}}}=C_{3^2}$ for $B_2$). 
Repeating some steps of the above proofs for this example, we would get that $A \Wr B_2$ belongs to the variety $\Ni_{3} \B_{9}$ which does not contain the group $A \Wr B_1$.
\end{Example}

\begin{Example} 
	\label{Example D4 Wr C_{3^2}^2}
Let $A_1$ be the Dihedral group $D_4=\langle a, b \mathrel{|} a^4=b^2=1,\; a^b=a^{-1} \rangle$, and let $A_2$ be the Quaternion group  
$Q_8=\langle i, j, k \mathrel{|} i^2=j^2=k^2=ijk \rangle$. These groups are not isomorphic, but they both are of order $8$ and of class $2$, and, moreover, they both generate the variety $\A_2^2 \cap \Ni_2$ \cite{HannaNeumann, Kovacs dihedral}.
As active groups take $B_1=C_{2^2}^3 \times C_2$ and $B_2=C_{2^2}\times C_2^7$. 

Clearly, $\gamma_1(D_4)=D_4$,\;\; 
$\gamma_2(D_4)=D_4'= \langle a^2 \rangle \cong C_2$,\;\; 
$\gamma_3(D_4)=\{1\}$.\;\;
For the group $B_1$ we have 
$K_{1,2}(B_1)=B_1=C_{2^2}^3 \times C_2$,\;
	$K_{2,2}(B_1)=C_{2}^3$,\;
	$K_{3,2}(B_1)=\{1\}$.\;
	Then $d=2$;\; 
	$e(1)=4$, since $|K_{1,2}(B_1) / K_{2,2}(B_1)|=2^4$,\;
	$e(2)=3$, since $|K_{2,2}(B_1) / K_{3,2}(B_1)|=2^3$.\;
	So, we have $a=1+1(1\cdot 4 + 2\cdot 3)=11$,\;
	$b=(2-1)2=2$.\;
	Also, $h=2$ (the class of $A$) and the exponent of $\gamma_1(A)=A$ is $3^1$, and we have $s(1)=1$. Thus, the nilpotency class of $A_1 \Wr B_1$ is 
$\max\{
11\cdot 1 + (2-1)2 \,\, , \,\, 
11\cdot 2 + (1-1)2
\}=22$.
	
$\gamma_1(Q_8)=Q_8$,\;\; 
$\gamma_2(Q_8)=Q_8'= \{\pm 1\} \cong C_2$,\;\; 
$\gamma_3(Q_8)=\{1\}$.\;
Then for $B_2$ we have 
	$K_{1,2}(B_2)=B_2=C_{2^2}\times C_2^7$,\;
	$K_{2,2}(B_2)=C_{2}$,\;
	$K_{3,1}(B_2)=\{1\}$.\;
	Again $d=2$,\; and we have
	$e(1)=8$, since $|K_{1,2}(B_2) / K_{2,2}(B_2)|=2^8$,\;
	$e(2)=1$, since $|K_{2,2}(B_2) / K_{3,2}(B_2)|=2^1$.\;
	Thus, we have $a=1+1(1\cdot 8 + 2\cdot 1)=11$,\;
	$b=(2-1)2=1$.\;
	Using the values $s(1)=2$, $s(2)=1$ again, we get the nilpotency class of $A_2 \Wr B_2$ as 
$\max\{
11\cdot 1 + (2-1)2 \,\, , \,\, 
11\cdot 2 + (1-1)2
\}=22$.

We have that $A_1 \Wr B_1$ and $A_2 \Wr B_2$ have the same nilpotency class $22$. 
And these groups both have the same length of solubility $3$, and the same exponent $16 = 4 \cdot 2^2$. Thus, based only on nilpotency class, length of solubility, exponent we cannot deduce if  
$\var{A_1 \Wr B_1}$ and $\var{A_2 \Wr B_2}$ are distinct subvarieties in $\Ni_3 \A_4$.

But by Theorem~\ref{Theorem var(Wr) are equal} or by Lemma~\ref{Lemma case of finie initial factors}, we have $\var{A_1 \Wr B_1}\neq \var{A_2 \Wr B_2}$. Also, $A_2 \Wr B_2$ does belong to the variety $\Ni_{4} \B_{2}$ which does not contain the group $A_1 \Wr B_1$.
\end{Example}

These examples evolved relatively uncomplicated groups, all of which were finite. But since the criterion of Theorem~\ref{Theorem var(Wr) are equal} is simple for applications, we can easily construct examples with infinite groups also. Say, if in the last example we replace
$B_1=C_{2^2}^3 \times C_2$ by $B_1=C_{2^2}^3 \times C_2^{\aleph_0}$\!\!,\;\; and  $B_2=C_{2^2}\times C_2^7$ by $B_2=C_{2^2}\times C_2^{\aleph_0}$\!\!,\;\; we get wreath products $A_1 \Wr B_1$ and $A_2 \Wr B_2$ which are \textit{non-nilpotent} by theorem of G.Baumslag \cite{Baumslag nilp wr}, and which generate distinct varieties by Theorem~\ref{Theorem var(Wr) are equal}.

\begin{Example} 
Next consider groups which, unlike the ones in previous examples, are not $p$-groups.
Let $A_1=A_2=A=D_4 \times Q_8 \times C_3 \times C_5 \times C_7^{\aleph}$. Take 
$B_1=C_{2^5}^{3} \times C_{2^4}^{\aleph}  \times
C_{2}^{8}  \times
C_3^{\aleph} \times C_7^8$
and
$B_2=C_{2^5}^{3} \times C_{2^4}^{\aleph_0}  \times
C_{2^3}^{2}  \times C_{2}^{9}  \times
C_3^{\aleph_0} \times C_7^9$.
The prime $5$ divides the exponent $m=4\cdot 3 \cdot 5 \cdot 7 = 420$ of $A$ but \textit{not} the exponent $n=32\cdot 3 \cdot 7 = 672$ of $B_1$ and of $B_2$. So, we can ignore the prime $5$ and in Theorem~\ref{Theorem var(Wr) are equal} apply the primes
$p_1=2$,   
$p_2=3$,  
$p_3=7$ only.  

$B_1(p_1)=B_1(2)=C_{2^5}^{3} \times C_{2^4}^{\aleph}  \times
C_{2}^{8}$ and 
$B_2(p_1)=B_2(2)=C_{2^5}^{3} \times C_{2^4}^{\aleph_0}  \times
C_{2^3}^{2}  \times C_{2}^{9}$. Although these $2$-primary components evidently are non-isomorphic, they are equivalent, i.e., $B_1(2) \equiv B_2(2)$. So, for $p_1=2$ the condition of Theorem~\ref{Theorem var(Wr) are equal}  is satisfied, and we can proceed to the next prime.

$B_1(p_2)=B_1(3)=
C_3^{\aleph}$ and 
$B_2(p_2)=B_2(3)=C_3^{\aleph_0} $.
These $3$-primary components again are non-isomorphic, but they are equivalent, i.e., $B_1(3) \equiv B_2(3)$. So, for $p_2=3$ the condition of Theorem~\ref{Theorem var(Wr) are equal} again is satisfied.	

Finally, consider
$B_1(p_3)=B_1(7)=
C_7^8$ and 
$B_2(p_3)=B_2(7)=C_7^9 $.
These $7$-primary components are not only non-isomorphic but \textit{also are non-equivalent}, i.e., $B_1(7) \not\equiv B_2(7)$. So, for $p_3=7$ the condition of Theorem~\ref{Theorem var(Wr) are equal} is \textit{not} satisfied, and we have 	$\var{A_1 \Wr B_1}\neq \var{A_2 \Wr B_2}$.
Notice how the difference of factors $C_7^8$ and $C_7^9$ matters for equality \eqref{Equation varieties are equal}, whereas the difference of factors, say, $C_2^8$ and $C_2^9$ does \textit{not} matter.
\end{Example}

\noindent
\textbf{Acknowledgements:}
The current work is supported  by the joint grant 18RF-109
of RFBR and SCS MES RA, and by the 15T-1A258 grant of SCS MES RA.
I am very much thankful to the referee for pointing out a miscalculation issue in the proof of Lemma~\ref{Lemma case of finie initial factors}.
I also thank my booth universities YSU and AUA for substantial support.

\medskip

%%%%%%%%%%%%%%%%%%%%%%%%%%%%%%%%%%%%%%%%%%%%%%
%%%%%%%%%%%%%%%%%%%%%%%%%%%%%%%%%%%%%%%%%%%%%%
%%%%%%%%%%%%%%%%%%%%%%%%%%%%%%%%%%%%%%%%%%%%%%

{\footnotesize
\vskip3mm
\noindent
%Informatics and Applied Mathematics Department\\
%Yerevan State University\\
%Alex Manoogian 1, Yerevan 0025, Armenia.\\ 
%E-mails: \href{mailto:vmikaelian@ysu.am}{vmikaelian@ysu.am},
E-mail: \href{mailto:v.mikaelian@gmail.com}{v.mikaelian@gmail.com}

\end{document}

Except for boolean algebra, there is no theory more universally employed in mathematics than linear algebra; and there is hardly any theory which is more elementary, in spite of the fact that generations of professors and textbook writers have obscured its simplicity by preposterous calculations with matrices

There is hardly any theory which is more elementary than linear algebra, in spite of the fact that generations of professors and textbook writers have obscured its simplicity by preposterous calculations with matrices.

Jean Dieudonn\'e